\newcommand{\RR}{\mathbb{R}}
\newcommand{\cM}{\mathcal{M}}
\newcommand{\cL}{\mathcal{L}}
\newcommand{\npde}{\textsc{npde}}
\newcommand{\ibc}{\textsc{cc}}
\newcommand{\nsm}{\textsc{nsm}}
\newcommand{\ncm}{\textsc{ncm}}
\newcommand{\inti}{\int_{-\infty}}
\newcommand{\up}{U_{j+1}}
\newcommand{\um}{U_{j-1}}
\newcommand{\uj}{U_{j}}
\newcommand{\duj}{\D t{U_{j}}}
\newcommand{\xj}{X_{j}}
\newcommand{\xpm}{X_{j\pm 1}}
\newcommand{\pars}{\vec\epsilon} % denotes (\alpha,\varepsilon,\gamma)
\newcommand{\cosin}{\operatorname{csn}}
\newcommand{\Z}[1]{\operatorname{\mathcal{H}}_{#1}}
\newcommand{\mud}{\mu\delta}
\newcommand{\bspde}{Burgers' \npde~\eqref{eq:burg}}
\newcommand{\npdee}{\npde~\eqref{eq:spde}}
\newcommand{\eigf}{e} % for eigenfunctions of linear operator
\newcommand{\vecs}[1]{\mathbb{#1}}
\def\arcproject#1{}\def\arcstatement#1{}
\title{Resolution of subgrid microscale interactions enhances the discretisation of nonautonomous partial differential equations}
\author{J.~E. Bunder\thanks{School of Mathematical Sciences, University of Adelaide, South Australia~5005, Australia.  
\protect\url{mailto:judith.bunder@adelaide.edu.au}} 
\and A.~J. Roberts\thanks{School of Mathematical Sciences, University of Adelaide, South Australia~5005, Australia.  
\protect\url{mailto:anthony.roberts@adelaide.edu.au}}}
\date{\today}
\begin{document}

\maketitle

\begin{abstract}
Coarse grained, macroscale, spatial discretisations of nonlinear nonautonomous partial differential\slash difference equations are given novel support by centre manifold theory.
Dividing the physical domain into overlapping macroscale elements empowers the approach to resolve significant subgrid microscale structures and interactions between neighbouring elements.
The crucial aspect of this approach is that centre manifold theory organises the resolution of the detailed subgrid microscale structure interacting via the nonlinear dynamics within and between neighbouring elements.
%Microscale details with coarse structure across a finite element are the most significant for the discrete model.
%Their influence also diffuses away to weakly correlate across spatial discretisations.
%Nonlinear interactions have two further consequences: additive forcing generates multiplicative noise in the discretisation; and effectively new noise processes appear in the macroscale discretisation.
The techniques and theory developed here may be applied to soundly discretise on a macroscale many dissipative nonautonomous partial differential\slash difference equations, such as the forced Burgers' equation, adopted here as an illustrative example. 
\end{abstract}

\paragraph{Keywords} nonlinear nonautonomous PDEs; spatial discretisation; nonautonomous slow manifold; multiscale modelling; closure; coarse graining.

\tableofcontents

\section{Introduction}

This article develops a systematic approach to constructing spatially discrete macroscale models of nonlinear nonautonomous partial differential\slash difference equations (collectively denoted nonlinear \npde{}s).
In particular, we address issues arising from rich and significant microstructure on a considerably smaller spatial scale than the macroscale grid.
Although this article concentrates on mapping nonautonomous dynamics defined on a continuum or fine microscale grid onto a coarse macroscale grid, the theory permits a potentially multigrid hierarchy. 
Thus the scope of this research is not only a rescaled discretisation of subgrid dynamics, but a fully multiscale theory which enhances our understanding of how fluctuation information is transferred over multiple levels of scale---identified
by Dolbow et al.~\cite{Dolbow04} as an outstanding unresolved problem. 
%Although this article considers differential examples of the theory, following analogous deterministic results~\cite{Roberts08c}, section~\ref{sec:cm} comments how the theory also underpins the modelling on a coarse grid of nonautonomous dynamics on a fine grid, and thence potentially across a multigrid hierarchy.
%Thus the research here contributes to understanding how fluctuation information is transferred over levels of scale---an outstanding need as reported by Dolbow et al.~\cite{Dolbow04} who summarised a wide ranging workshop.
%More recently, reporting a review of Applied Mathematics, Brown et al.~\cite[\S2.1.3]{Brown08} similarly called  for ``new approaches for efficient modeling of large stochastic systems''.  
%This article explores sound modelling of complex nonautonomous systems over large space domains. 

We invoke nonautonomous centre manifold (\ncm)  theory~\cite[e.g.]{Aulbach2000, Potzsche2006, Haragus2011}  to ensure the accuracy, stability and efficiency of relatively coarse spatial discretisations of \npde{}s. 
An understanding of centre manifold theory is best obtained via normal form theory, developed for both random~\cite{Arnold03} and deterministic normal forms~\cite[e.g.]{Murdock03}:  
a coordinate transform separates the dynamics of a given system into coarse grid slow dynamics and fast subgrid dynamics; 
in the new coordinates the subgrid modes decay exponentially quickly over a finite domain from all initial conditions; 
the slow modes evolve on the centre manifold as a rigorous closure on the coarse grid.  
Section~\ref{sec:cm} discusses the theory which ensures such a slow centre manifold both exists and emerges exponentially quickly as a macroscale discretisation.
%Related dynamical systems approaches by Blomker, Hairer and Pavliotis~\cite{Blomker04} supports stochastic Ginzburg--Landau model in pattern forming systems.
%Caraballo, Langa and Robinson~\cite{Caraballo01} and Duan, Lu \& Schmalfuss~\cite{Duan04} proved the existence of invariant manifolds for multiplicative stochastic dynamics.
However, complications arise from nonautonomous effects and the practical challenge is to ensure the intricate spatio-temporal dynamics of the given \npde\ are reflected in the closed form nonautonomous slow manifold (\nsm{}) defined on the coarse macroscale grid.
%Numerical methods to integrate stochastic \emph{ordinary} differential equations are known to be delicate and subtle~\cite[e.g.]{Kloeden92}.
%Considerable care must also be taken in modelling the subgrid nonlinear interactions which interact both across and between macroscale elements~\cite[e.g.]{Grecksch96, Werner97}.
%Section~\ref{sec:cm} discusses the extension of centre manifold theory to \ncm{} theory and the multiscale modelling of \npde{}, using the example of a nonlinear Burgers' equation with nonautonomous forcing. 
%We surely need to take considerable care for \npde{}s as well~\cite[e.g.]{Grecksch96, Werner97} in order to model the subgrid microscale nonlinear interactions. 
%Modern dynamical systems theory provides a requisite careful methodology. 

%The sound methodology for modelling \npde{}s presented here has the potential to find future applications underpining multiscale modelling in areas such as climate, materials science and the biosciences~\cite[e.g.]{Dolbow04}.
%For example, the gap-tooth scheme of Kevrekidis et al.~\cite{Gear03, Samaey03a, Samaey03b} is often implemented with particle simulators.
%Coupling patches of the stochastic microscale dynamics of such particle simulators, analogous to the coupling of stochastic element developed here, will be an important generalisation of coupling patches with deterministic dynamics~\cite{Roberts04d} to the stochastic dynamics explored herein.

%This work builds on a significant body of work in which centre manifold theory is used to establish support for multiscale discretisations. 
Earlier research developed a dynamical systems approach to discretising autonomous deterministic \pde{}s.
Examples include the fourth order Kuromoto--Sivashinsky equation~\cite{MacKenzie00a, MacKenzie05a}, the two dimensional Ginzburg--Landau equation~\cite{Roberts2011a} and Burgers' equation~\cite{Roberts98a, Roberts01a, Roberts01b}.
%Several nonlinear stochastic problems were also studied~\cite{Roberts03c, Roberts06k, Roberts06g, Wang2008}.
%\emph{Linear} nonautonomous diffusion was briefly studied~\cite{Roberts06g}.
%Earlier research developed a dynamical systems approach to discretising autonomous deterministic \pde{}s~\cite[e.g.]{MacKenzie05a} and briefly looked at the specific case of nonautonomous \emph{linear} diffusion~\cite{Roberts06g}.
%Others proved the appearance of stochastic Ginzburg--Landau models in pattern forming systems~\cite{Blomker04}, and the existence and uniqueness of global solutions to the \bspde~\cite{DaPrato94, DaPrato96}.
Section~\ref{sec:cm} significantly extends theoretical support for this dynamical systems approach to the discrete multiscale  modelling of general nonlinear nonautonomous \pde{}s.

The scope of this article is to  develop a method for obtaining a macroscale closure of a  field~$u(x,t)$ evolving in continuous time~$t$ in some Banach space of one spatial dimension, as defined by the general nonlinear \npde{}
\begin{equation}
    \D tu=\cL(u) u+\alpha f(u)+\varepsilon\phi(u,x,t) ,
    \label{eq:spde}
\end{equation}
where $\cL(u)$~is a smooth, second-order, quasi-linear operator which is dissipative (see Assumption~\ref{ass:sss}), $f(u)$~denotes smooth, autonomous, perturbations, and the `forcing' term $\phi$~represents the nonautonomous effects in the \npde.
We expect this method to generalise to higher spatial dimensions as it does for autonomous \pde{}s~\cite{Roberts2011a}.
In principle the nonautonomous forcing may be either deterministic or nondeterministic (such as stochastic noise), but here we concentrate on the deterministic case. 
When necessary for definite theoretical statements and for displayed numerical simulations, we adopt boundary conditions for the \npdee\  of $L$-periodicity, $u(x+L,t)=u(x,t)$, and assume that initial conditions lie within the finite domain of validity of the \ncm\ theory.
Section~\ref{sec:cm} discusses that the \npdee\ not only includes the class of \npde{}s differential in space, but also cases where the spatial domain~$x$ is discrete and the \npde{}s are difference equations in space such that~$\cL$, $f$ and~$\phi$ are discrete operators on the spatial domain. 

Typical issues and results are illustrated herein via the definite example of discretising in space the nonlinear Burgers' equation with nonautonomous forcing:
\begin{equation}
    \D tu +\alpha u\D xu =\DD  x u +\varepsilon\phi(x,t)
    \label{eq:burg}\,.
\end{equation}
The simulation of the \bspde\ in Figure~\ref{fig:micro} illustrates complicated microscale fluctuations and their cumulative appearance in the macroscale.
Givon et al.~\cite[p.R58]{Givon04} similarly used specific example problems to develop and illustrate many issues in their approach. 
%The intention here is to establish \nsm\ theory and techniques as a practical approach to ensure the accuracy, stability and efficiency of numerical discretisations of \npde{}s. 
%The intention here is to establish \ncm\ theory as a practical approach for ensuring the accuracy, stability and efficiency of numerical discretisations of \npde{}s. 
%Thus we develop this approach in such a way that it will be readily applicable to discretisations of a wide range of \npde{}s with quite general nonlinear and nonautonomous profiles, and not simply variations of Burgers' equation~\eqref{eq:burg}.

The issue of macroscale modelling closure is a longstanding problem in autonomous dynamics, let alone in cases with general nonautonomous dynamics.
For example,  it is not at all clear how best to discretise the nonlinear advection term~$\alpha uu_x$ in \bspde{}. 
Should one use $\rat12\alpha(u^2)_x$ or should $u$~and~$u_x$ be discretised independently? Perhaps a combination of these two options is beneficial, as suggested by Fornberg~\cite{Fornberg73} and used to improve stability~\cite[e.g.]{Foias91}.
The traditional approach for linear systems of discretising each term separately does not tell us how to proceed. 
Yet a centre manifold approach supports a definite scheme for best performance on macroscale grids~\cite{Roberts98a}.

Sections \ref{sec:quad}~and~\ref{sec:sto} construct, as an example application, the following low order, discrete macroscale model \npde{}s for the nonlinear dynamics of the forced \bspde:
\begin{subequations}\label{eq:lowg}%
\begin{align}
    \duj\approx{}&
    \frac1{H^2}\big( 1+\rat1{12}\alpha^2H^2\uj^2 \big)(\up-2\uj+\um)
    -\alpha\frac{1}{2H}\uj(\up-\um)
    \label{eq:lowga}
   \\{}&
    +\varepsilon\left[ \phi_{j,0} -\alpha\frac{2H}{\pi^2}\phi_{j,1}\uj
    -\alpha^2\frac{8H^2}{3\pi^4}\phi_{j,2}\uj^2 \right]
    + .01643\alpha^2H^2\varepsilon^2\uj\,,
    \label{eq:lowgb}
\end{align}%
\end{subequations}
where grid values \(\uj(t)\approx u(X_j,t)\) on a grid~\(X_j\) of macroscale spacing~\(H\).
This model is for the case when the subgrid microscale structures within each element is truncated to the first three Fourier modes:
\begin{equation}
    \phi(x,t)=\phi_{j,0}(t) +\phi_{j,1}(t)\sin[\pi(x-\xj)/H]
    +\phi_{j,2}(t)\cos[2\pi(x-\xj)/H]\,.\label{eq:f}
\end{equation}
The terms~\eqref{eq:lowga} of the discretisation~\eqref{eq:lowg} is the so-called holistic discretisation for Burgers' equation which has good properties on finite sized elements~\cite{Roberts98a}; in particular, arising from nonlinear advection-diffusion interaction, the nonlinearly enhanced diffusion promotes stability of the scheme for larger nonlinearity. 
The terms~\eqref{eq:lowgb} of the discretisation approximate some of the  influences of the subgrid microscale nonautonomous forcing.
%Analogous to the modelling \eqref{eq:bssde}$\mapsto$\eqref{eq:bssdm}, t
The nonlinearity in the subgrid microscale dynamics of Burgers' equation transforms the additive nonautonomous components~\eqref{eq:f} of the \bspde\ into multiplicative components in the discretisation, such as $\phi_{j,1}\uj$.  
Many modelling schemes miss such multiplicative terms because they do not resolve the subgrid microscale processes that are revealed by the \ncm\ methodology.

\begin{figure}
    \centering
\includegraphics{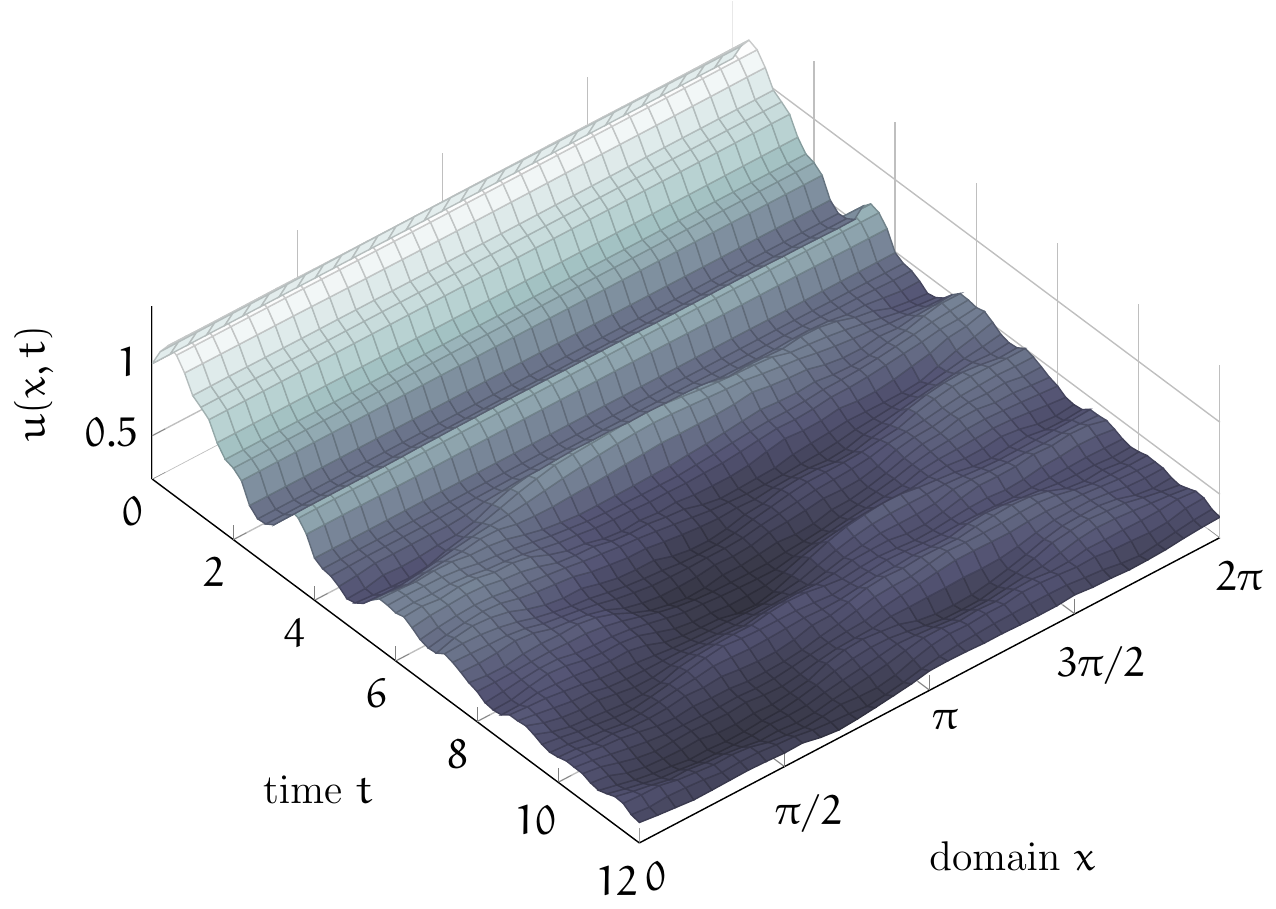}
\caption{An example microscale simulation of the spatially discretised forced \bspde, with initial condition $u(x,0)=1$, nonlinearity $\alpha=0.3$ and forcing coefficient $\varepsilon=0.05$.  
The nonautonomous forcing $\phi(x,t)=\xi_i(t)$, for each fine space mesh index $i=x/\Delta x$\,, is defined by the  Lorenz equations: $d\xi_i/dt=10(\eta_i-\xi_i)$, $d\eta_i/dt=\xi_i(28-\zeta_i)-\eta_i$, $d\zeta_i/dt=\xi_i\eta_i-\rat83\zeta_i$\,, and random initial values \((5,8,N(10,1))\) 
%$\eta_i(0)=5$, $\zeta_i(0)=8$ and $\xi_i(0)$~a random number normally distributed about~$10$ with variance~$1$ 
(so $\varepsilon\phi(x,0)$ has mean~$0.5$).
This simulation uses a fine space-time mesh with $\Delta x=\pi/16$ and $\Delta t=0.01$ but plotted every $19$th~time step. As time progresses, fluctuations induced by the nonautonomous forcing~$\phi(x,t)$ extend across the spatial domain via nonlinear interactions.}
    \label{fig:micro}
\end{figure}

As seen in Figure~\ref{fig:micro}, subgrid variations in the forcing typically generate microscale spatial structures with relatively high wavenumber and steep variations.
Through a form of resonance, the mean effects of these modes may be important on the large scale dynamics as quantified by Section~\ref{sec:sto}.
For example, the last term of the discrete model \npde~\eqref{eq:lowg}, being proportional to~$\alpha^2\varepsilon^2\uj$, arises from nonlinear self-interactions of subgrid scale dynamics.
Herein the term `resonance' includes phenomena where nonautonomous fluctuations interact with each other and themselves through nonlinearity in the dynamical system to generate not only long time drifts but also potentially to change stability as is also recognised in stochastic systems~\cite[e.g.]{Knobloch83, Boxler89, Drolet01, Roberts03c, VandenEijnden05c}.
Coarse grained discrete model \npde{}s, such as \eqref{eq:lowg}, which use large space-time grids for efficiency, must account for such subgrid microscale resonance in their closure in order to resolve the significant yet subtle interactions.

%The influence of the subgrid microscale structure on the macroscale discretisation, apparent in~\eqref{eq:lowg}, is especially important for nonlinear systems. 
Another example is furnished by coarse grid modelling of the nonlinear nonautonomous Burgers'-like microscale grid equation
\begin{equation}
\frac{du_i}{dt}=\frac{4}{H^2}(u_{i+1}-2u_i+u_{i-1})
-\frac\alpha H u_i(u_{i+1}-u_{i-1}) +\varepsilon\phi_i(t) ,
\label{eq:bssde}
\end{equation}
where subscript~$i$ indexes the fine grid (in contrast to~$j$ which we use to index a coarse macroscale grid), the first term on the right-hand side is a diffusive-like dissipative reaction with neighbours, the second term represents a form of nonlinear advection of strength~$\alpha$, and the last term is an additive nonautonomous forcing which is independent at each grid point~$i$ but of uniform strength~$\varepsilon$.  
The theory described in Section~\ref{sec:cm} for the general \npdee\ also supports the modelling of discrete models such as~\eqref{eq:bssde} on a coarser grid with twice the grid spacing.
Extending earlier research for a discrete linear autonomous model~\cite{Roberts08c}, define the coarse grained amplitudes~$\uj(t)=u_{2j}(t)$ and our coarse grid model of~\eqref{eq:bssde} is
\begin{align}
\duj\approx {}&\frac{1}{H^2}(\up-2\uj+\um)
-\frac\alpha {2H}\uj(\up-\um) \nonumber\\
{}&+\varepsilon\left[\psi_{j0}(t)
-\frac{\alpha H}8\uj\psi_{j1}(t)\right].
\label{eq:bssdm}
\end{align}
To this level of approximation the coarse grid evolution~\eqref{eq:bssdm} is  the same form as the fine grid~\eqref{eq:bssde} with appropriately renormalised diffusion and nonlinear advection, but with the nonautonomous forcing  now weakly correlated across coarse grid points as the coarse grid nonautonomous forcing is the multigrid restriction $\psi_{j0}=\frac14\phi_{2j-1}+\frac12\phi_{2j}+\frac14\phi_{2j+1}$\,, and with a new multiplicative term arising from resolving the subgrid interaction between the nonlinear advection and structure in the subgrid microscale.
To complement other methods, \ncm\ theory~\cite[e.g.]{Aulbach2000, Potzsche2006, Haragus2011} provides a systematic approach to determining the macroscale modelling of such microscale nonautonomous nonlinear interactions.

Homogenisation and averaging are popular methods for deriving mean effects on slow macroscale modes from fast microcale modes~\cite[e.g.]{Pavliotis07}.
% and complicated deviation arguments estimate subgrid fluctuations~\cite{Wang2008}.
However, homogenisation requires a small parameter~$\epsilon$ which measures an asymptotically infinite scale separation between slow and fast modes. 
In discretising the \npdee\ with support from \ncm\ the scale separation between these slow and fast modes may be finite, which is unsuitable for homogenisation. 
Even more importantly, when a macroscale discretisation of a discrete microscale problem is required, such as \eqref{eq:bssde}$\mapsto$\eqref{eq:bssdm},  there is no definable asymptotically small scale separation parameter~$\epsilon$ and homogenisation is not applicable.
%in discretising the \npdee\ via homogenisation there is only a finite scale separation between slow and fast modes: in forming discrete models there is no asymptotically small parameter~$\epsilon$ measuring an asymptotically infinite scale separation as required by singular perturbation theory.
Dolbow et al.~\cite[p.30]{Dolbow04} specifically call for ``new multiscale mathematical methods developed and used to derive multiscale models for some of the `difficult' cases in multiscale science; e.g., problems without strong scale separation,''.  
Our application of \ncm\ provides such a methodology.

%Centre manifold theory supports the large time macroscale modelling of detailed microscale dynamics, even when there is a finite scale separation between slow and fast modes.
%Knobloch \& Wiesenfeld~\cite{Knobloch83} and Boxler~\cite{Boxler89, Boxler91}, for example, explicitly used \ncm\  theory to support the modelling of \npde{}s.  
%Many, such as Berglund \& Gentz~\cite{Berglund03}, Bl\"omker, Hairer \& Pavliotis~\cite{Blomker04} and Kabanov \& Pergamenshchikov~\cite{Kabanov03}, use the same separation of time scales that underlies \ncm\ theory to form and support low~dimensional, long time models of \npde{}s that have both fast and slow modes. 
%The broad applicability of \ncm\ is also an advantage when considering different forms of the nonautonomous forcing, with \ncm\ not only supporting the discretisation of \npde\ with deterministic forcing~\cite{Roberts98a, Roberts00a, MacKenzie00a, Roberts01a, Roberts01b, MacKenzie03}, but also stochastic forcing~\cite{Bensoussan95, Arnold03, Roberts06k}.

Section~\ref{sec:quad} provides one resolution of the mean effects of detailed linear and nonlinear subgrid microscale structure on the macroscale grid values~$\uj$, with support provided by \ncm{}.
Section~\ref{sec:onhnni} discusses a case of \bspde\ where traditional straightforward numerical approximations miss all of the easily apparent effects of the subgrid microscale structure on the macroscale discretisation and are thus incapable of finding an accurate macroscale closure for \npde{}s of this type.
The wide ranging reports of Dolbow et al.~\cite{Dolbow04} and Brown et al.~\cite{Brown08} identify accurate macroscale closure as an outstanding challenge in multiscale modelling of nonautonomous physical systems of great interest to applied mathematics.
We further develop and validate the \ncm\ methodology for closure of macroscale discretisations.

The focus here is on deterministic forcing~$\phi(x,t)$. 
%Computational limitations often require the nonautonomus forcing~$\phi(x,t)$ 
%be truncated to a few Fourier modes, as in~\eqref{eq:lowg}.
Nonetheless, most of the analysis and models in Sections~\ref{sec:cm}, \ref{sec:quad} and~\ref{sec:sto} also hold for nondeterministic stochastic forcing.  
%Further, correlated noise~$\phi$ could be handled in the same dynamical systems approach by introducing auxiliary \npde{}s such as
%\begin{displaymath}
%    \D t\phi=-\cG\star\phi+\mu(x,t)\,,
%\end{displaymath}
%for some white space-time noise~$\mu$ and convolution kernel~$\cG(x)$. 
%The forcing~$\phi$ would then have spatial correlations with power spectrum${}\propto 1/|\tilde\cG(k)|^2$ for Fourier transform~$\tilde\cG$ of the kernel~$\cG(x)$.  
%Consequently, this approach encompasses quite general noise processes. 
In cases where the forcing~$\phi(x,t)$ is stochastic it is interpreted in the Stratonovich sense so that the rules of traditional calculus apply as preferred by many physicists and engineers. 
%Thus a direct application of this modelling is to nondeterministic physical and engineering systems where the Stratonovich interpretation is the norm. 

\section{Nonautonomous centre manifold theory underpins modelling}
\label{sec:cm}

This section details one way to place the spatial discretisation of \npde{}s within the purview of \ncm\ theory.
%For nonlinear, nonautonomous deterministic differential equations, Chicone and Latushkin~\cite{Chicone97} proved the existence of a \ncm\ 
%and Barreira and Valls~\cite{Barreira2008, Barreira2011} established the existence and smoothness of \ncm{}s for nonautonomous difference and differential equations.
%In a series of articles, P\"otzche and Rasmussen~\cite{Potzsche2006, Potzsche08,  Potzsche08b}
%developed a numerical scheme for the computation of nonautonomous invariant manifolds for nonautonomous difference equations with both the spatial and temporal dimensions discretised, and provide a summary of the theoretical support~\cite{Potzsche08}.
%Scarpellini~\cite{Scarpellini91} was perhaps the first to prove the existence of a \ncm\ as a weak solution for a nonautonomous \ode{} of `infinite' dimensions, which is relevant to our case with a general number of overlapping elements.
Powerful and general \ncm\ theory was developed by Aulbach and Wanner~\cite{Aulbach96, Aulbach99, Aulbach2000}: 
first they proved the existence of centre and other integral manifolds%
\footnote{In nonautonomous theory the term ``integral manifold'' is usually used to denote the generalisation of the concept of an invariant manifold in autonomous systems.} 
of `infinite' dimensional systems~\cite[Theorem~6.1]{Aulbach96}; 
second they established the emergence of solutions on the centre manifold by proving topological equivalence to a system with corresponding linear hyperbolic modes and no hyperbolic dependence in the centre variables~\cite[Theorem~4.1]{Aulbach2000}.
Subsequently, P\"otzche and Rasmussen~\cite[Proposition~3.6]{Potzsche2006} established a useful approximation theorem.
Although Aulbach and Wanner's global theory is limited to Lipschitz nonlinearities, the theory is straightforwardly extended to more general nonlinearities using cut-off functions at the cost of then being restricted to a finite domain in state space, as discussed by Haragus and Iooss~\cite[Chapt.~2]{Haragus2011} for example.
In the stochastic case, the centre manifold theory of Boxler~\cite{Boxler89} was elaborated by Arnold~\cite[Chapt.~7]{Arnold03}, and a freely available concise summary is available~\cite[Appendix~A, e.g.]{Roberts05c}.
Subject to some conditions,  \ncm\ theory assures us of the existence and emergence of discrete models of the general nonlinear  \npdee.

To proceed, we embed the dynamics of the \npdee\ into a higher dimensional nonautonomous system.
Then from the base of a subspace of equilibria, this section establishes the existence and emergence of a nonautonomous slow manifold (\nsm{}) that forms the macroscale discrete model.
The key here is simply to establish the preconditions for the application of \ncm\ theory:
when a definite example is discussed we invoke the spatially continuous \bspde, or the discrete microscale dynamics of~\eqref{eq:bssde}.
Then the established \ncm\ theory rigorously supports a wide range of models and illuminates a wide range of modelling issues---to know that useful results are obtainable we only need to verify the preconditions.

\subsection{Divide space into overlapping finite elements}
\label{sec:dsiofe}

The method of lines discretises a \pde\ in space~$x$ and integrates in time as a set of ordinary differential equations---sometimes called a semi-discrete scheme \cite[e.g.]{Foias91, Foias91b}.
Similarly, this article only discusses the relatively coarse grained spatial discretisation of \npde{}s as a continuous time,  dynamical system.
%Earlier research~\cite{Roberts05c} explored how to model the \bspde\ in just a small domain of one element and focussed on how to handle the complicated microscale interactions  in an \npde.
%This article not only establishes theoretical support for a wide range of \npde{}s, it focusses on the complicated nonlinear interactions on the subgrid microscale not only within but also between near neighbour macroscale spatial elements.

\begin{figure}
\centering
%\begin{tikzpicture}
%\draw[->](0,3)node[left]{(a)}--(10,3)node[below]{$x$};
%\draw(1,3)node[below]{$X_{j-2}$}--(1,2.8);
%\draw(3,3)node[below]{$X_{j-1}$}--(3,2.8);
%\draw(5,3)node[below]{$X_{j}$}--(5,2.8);
%\draw(7,3)node[below]{$X_{j+1}$}--(7,2.8);
%\draw(9,3)node[below]{$X_{j+2}$}--(9,2.8);
%\draw[red] plot [smooth, tension=1] coordinates {(1, 3.8) (2, 3.6) (3, 3.8) (4, 3.7) (5, 3.8)};
%\draw[green] plot [smooth, tension=1] coordinates {(3, 3.6) (4, 3.6) (5, 3.6) (6, 3.4) (7, 3.6)};
%\draw[blue] plot [smooth, tension=1] coordinates {(5, 3.4)  (6, 3.6) (7, 3.4)  (8, 3.3) (9, 3.4)};
%\draw[->](0,1)node[left]{(b)}--(10,1)node[below]{$x$};
%\draw(1,1)node[below]{$X_{j-2}$}--(1,0.8);
%\draw(3,1)node[below]{$X_{j-1}$}--(3,0.8);
%\draw(5,1)node[below]{$X_{j}$}--(5,0.8);
%\draw(7,1)node[below]{$X_{j+1}$}--(7,0.8);
%\draw(9,1)node[below]{$X_{j+2}$}--(9,0.8);
%\draw[red] plot [smooth, tension=1] coordinates {(1, 1.8) (2, 1.6) (3, 1.5) (4, 1.7) (5, 1.7)};
%\draw[green] plot [smooth, tension=1] coordinates {(3, 1.5) (4, 1.6) (5, 1.7) (6, 1.4)  (7, 1.2)};
%\draw[blue] plot [smooth, tension=1] coordinates {(5, 1.7) (6, 1.6) (7, 1.2)  (8, 1.3) (9, 1.5)};
%\end{tikzpicture}
\begin{tabular}{cc}
(a) &
\begin{tikzpicture} 
\begin{axis}[xlabel={$x$}, axis equal image, no marks
,axis x line=middle , hide y axis ,samples=41 , thick 
,ymin=0,xmin=-2.4,xmax=2.4
,xtick={-2,...,2}
,xticklabels={$X_{j-2}$,$X_{j-1}$,$X_{j}$,$X_{j+1}$,$X_{j+2}$}
, smooth, tension=0.8]
\addplot coordinates {(-2,0.8)(-1.5,0.6)(-1,0.8)(-0.5,1)(0,0.8)};
\addplot coordinates {(-1,1)(-0.5,1.1)(0,1)(0.5,0.9)(1,1)};
\addplot coordinates {(0,0.6)(0.5,0.8)(1,0.6)(1.5,0.5)(2,0.6)};
\end{axis} 
\end{tikzpicture}
\\
(b)&
\begin{tikzpicture} 
\begin{axis}[xlabel={$x$}, axis equal image
,axis x line=middle , hide y axis ,samples=41 , thick 
,ymin=0,xmin=-2.4,xmax=2.4
,xtick={-2,...,2}
,xticklabels={$X_{j-2}$,$X_{j-1}$,$X_{j}$,$X_{j+1}$,$X_{j+2}$}
, smooth, tension=0.8]
\addplot coordinates {(-2,0.2)(-1,1)(0,1)};
\addplot coordinates {(-1,1)(0,1)(1,0.5)};
\addplot coordinates {(0,1)(1,0.5)(2,0.5)};
\end{axis} 
\end{tikzpicture}
\end{tabular}
\caption{Construct three overlapping elements centred about the macroscale grid points~$X_j$ and~$X_{j\pm1}$  with (a)~decoupled elements $\gamma=0$ and (b)~fully coupled elements $\gamma=1$. 
The three curves are the microscale field solutions within each overlapping element, $u_{j-1}(x,t)$,  $u_{j}(x,t)$ and $u_{j+1}(x,t)$. 
When decoupled, the microscale fields solutions within the three elements are independent of each other. 
At full coupling the \ibc~\eqref{eq:ibc} ensure that $u_j(x,t)=u_{j-1}(x,t)$ at the macroscale grid points~$X_j$ and~$X_{j-1}$, and $u_j(x,t)=u_{j+1}(x,t)$ at the macroscale grid points~$X_j$ and~$X_{j+1}$.}
\label{fig:elements}
\end{figure}
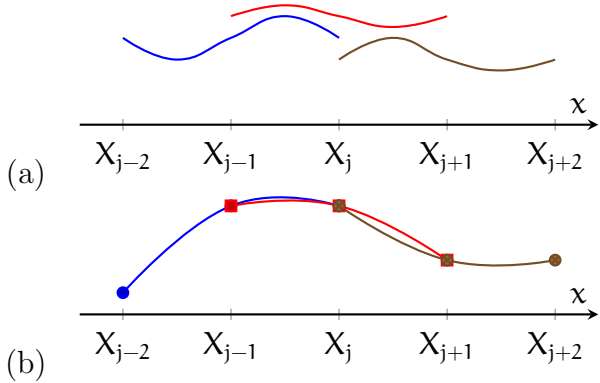

Place the spatial discretisation of a nonlinear \npdee, such as the \bspde, within the purview of \ncm\ theory by the following artifice.
Let equi-spaced macroscale grid points at~$\xj$ be a distance~$H$ apart.
Define the $j$th~element as $|x-\xj|\leq H$ which overlaps with neighbouring elements.  
Therefore, the $j$th~element is centred about $x=X_j$ and has a left and right boundary at $x=X_{j-1}$ and $x=X_{j+1}$, respectively. 
Say there are $m$~such elements on an \(L\)-periodic domain.
The first step is to embed the dynamics of the \npdee\ into a higher dimensional system defined with these overlapping elements.
Let $u_j(x,t)$~denote the microscale field in the $j$th~element $X_{j-1}\leq x\leq X_{j+1}$\,. 
Then on each element solve the \npdee\ with the field in the $j$th~element, namely we analyse the set of \npde{}s,
\begin{equation}
    \D t{u_j}=\cL(u_j) u_j+\alpha f(u_j)+\varepsilon\phi_j(u_j,x,t).
    \label{eq:spdei}
\end{equation}
Following the analogous and proven  approach to discretising autonomous \pde{}s~\cite[e.g.]{MacKenzie05a, Roberts00a, Roberts2011a}, we invoke nonlocal coupling conditions~(\ibc{}s) between neighbouring elements:
\begin{equation}
    u_j(\xpm,t) = (1-\gamma)u_j(\xj,t) + \gamma u_{j\pm1}(\xpm,t).
    \label{eq:ibc}
\end{equation}
The coupling parameter~$\gamma$ controls the flow of information between adjacent elements: 
when $\gamma=0$\,, adjacent elements are decoupled by the \ibc~\eqref{eq:ibc} as illustrated by Figure~\ref{fig:elements}(a); 
whereas when $\gamma=1$\,, the \ibc~\eqref{eq:ibc} requires the microscale field in the $j$th~element at the grid points~$X_{j\pm1}$ to extrapolate to the microscale fields in the neighbouring  $(j\pm1)$th~elements at~$X_{j\pm1}$ as illustrated by Figure~\ref{fig:elements}(b).
Such coupling of overlapping elements is analogous to the `border regions' of the heterogeneous multiscale method~\cite[e.g.]{E04}, to the `buffers' of the gap-tooth scheme~\cite[e.g.]{Samaey03b}, and to the overlapping domain decomposition that improves convergence in waveform relaxation of parabolic \textsc{pde}s~\cite[e.g.]{Gander98}.
The specific coupling \ibc{}~\eqref{eq:ibc} ensures discrete models are consistent with deterministic \pde{}s to high order in~$H$ as the element size~$H\to0$\,, both linearly~\cite{Roberts00a} and nonlinearly, and also in multiple space dimensions~\cite[\S4]{Roberts2011a}.

%The microscale \npde~\eqref{eq:spde} is solved within each of the $m$ overlapping elements with boundary conditions supplied by the \ibc{}s~\eqref{eq:ibc}, thus providing $m$~microscale fields~$u_j(x,t)$. 
The macroscale model is a system of coupled differential equations for the  grid values \(U_j(t):=u_j(X_j,t)\) in each element. 

%Importantly, Section~\ref{sec:cm} establishes that the methodology developed with the \ibc{}s~\eqref{eq:ibc} applies to a wide variety of continuum and discrete \npde{}s, not just the example \bspde.  

%where $\phi_j$ gives the nonautonomous effects on the $j$th~element which for convenience we separate from other terms in the \npde.
%The fields in neighbouring elements are coupled via the \ibc~\eqref{eq:ibc}.
When necessary for definitive theory and for numerical simulations, define $m$~overlapping elements with $j=1,2,\ldots, m$\,, $L$-periodic boundary conditions ($L=mH$) for the global field~$u(x,t)$, and require that the fields in the $m$~elements satisfy $u_{j\pm m}(x\pm L,t)=u_j(x,t)$ for all~$x$ and elements~$j$.   
Equivalently, consider the \npde\ in the interior of a domain which is sufficiently large such that the physical boundaries are far enough away to be immaterial.
%That is, the analysis throughout assumes space and time are homogeneous so that the resultant discretisations are homogeneous as seen in \eqref{eq:diffm}, \eqref{eq:bssdm} and~\eqref{eq:lowg}.
Evidently, the set of \npde{}s~\eqref{eq:spdei} form a higher dimensional nonautonomous system that effectively reduces to the \npdee\ in the limit of full coupling.

We avoid defining a precise function space for analysis of the \npde{}s~\eqref{eq:spdei}, although \(u\), and hence~\(u_j\), must be in a suitable Banach space.
The reason is that extant theorems place differing conditions on the nature of the functions appearing in the \npde; see Corollary~\ref{lem:exist} for examples.
Herein we primarily develop a formal methodology applicable to all established rigorous theoretical conditions, but flexible enough to cater also for a wide class of physically interesting nonautonomous systems.    
Depending upon the details of an application, the theoretical assurances changes as indicated by alternative conditions. 

\paragraph{Linearise about useful equilibria}
The second step for the macroscale modelling is to  anchor the discrete modelling, defined by the overlapping elements, upon the subspace of equilibria~$\vecs E_0$ of piecewise constant solutions. 
The equilibria are that $u_j(x,t)=\uj ={}$constant for all~$j$, for the autonomous case $\varepsilon=0$\,, for no interelement coupling $\gamma=0$ in the \ibc~\eqref{eq:ibc}, and no nonlinearity $\alpha=0$\,.
For each of the equilibria in~$\vecs E_0$ it is straightforward to find the linear Oseledec spaces that form the foundation of the \nsm\ as here they are then standard linear eigenspaces.
Linearising about each equilibria, $u_j(x,t)=U_j+u'_j(x,t)$ for small perturbations~$u'_j(x,t)$, the dynamics of the \npdee\ with  \ibc~\eqref{eq:ibc} reduce to that of dissipation within each element isolated from all neighbours:
\begin{equation}
    \D t{u'_j}=\cL_j u'_j  
    \quad\text{such that}\quad
    u'_j(\xpm,t)=u'_j(\xj,t)\,,
    \label{eq:spdel}    
\end{equation}
where the prime denotes a perturbation to the equilibrium field of each element of~$\vecs E_0$ and $\cL_j=\cL(\uj )$;
for example, \bspde\ linearises to the diffusion equation $u'_t=u'_{xx}$\,.%
\footnote{The need to linearise the \npde{} places restrictions on the form of the quasi-linear operator $\cL(u)$. 
Specifically, we cannot consider cases where $\cL_j=\cL(U_j)=0$ since on linearisation we do not obtain the desired linear form but instead the trivial \pde{} $u_t'=0$ which is non-dissipative in the neighbourhood of the equilibria~$\vecs E_0$ and \ncm\ theory is not applicable. 
For example, we cannot consider $\cL(u)=u_x\partial^2_{x}$ as $u_x=0$ when $u=U_j$.}

\begin{assumption}[slow+stable spectrum] \label{ass:sss}
Assume the linearised system~\eqref{eq:spdel} has dissipative dynamics on each of the $m$~elements.
More specifically, assume for all elements~\(j\) that the spectrum~$\{\beta_{0}=0, -\beta_{j1}, -\beta_{j2}, \ldots\}$ of the dissipative operator~$\cL_j$ is discrete with negative real parts bounded away from zero (apart from the single zero eigenvalue): $0>-\beta\geq\Re(-\beta_{j1})>\Re(-\beta_{j2})>\cdots$ for some real, positive decay rate~$\beta$.
\end{assumption} 
For example, the linearised problem~\eqref{eq:spdel} for \bspde\ is spatial diffusion which within each element has a spectrum which is the same for all elements and given by the negative of the decay rates $\beta_k={\pi^2k^2}/{H^2}$\,, for integer $k=0,1,2,\ldots$\,; the corresponding (generalised) eigenfunctions in each element are the modes%
\footnote{The corresponding adjoint (generalised) eigenfunctions are $(1-|\theta|/\pi)\cos k\theta$\,, $\sin k\theta$ and $\sin k|\theta|$ because the adjoint boundary conditions to those in~\eqref{eq:spdel} are $u'_j=0$ at $x=\xpm$\,, $u'_j$~is continuous at~$\xj$, and the nonlocal derivative condition $u'_{jx}(\xj^+,t)-u'_{jx}(\xj^-,t)=u'_{jx}(X_{j+1},t)-u'_{jx}(X_{j-1},t)$.}
\begin{equation}
\cos k\theta\text{ \ for even }k,\quad
\sin k\theta\text{ \ for }k\geq1\,,\quad
\theta\sin k\theta\text{ \ for even }k\geq2\,,
\label{eq:eigfns}
\end{equation}
where the subgrid variable $\theta=\pi(x-\xj)/H$~measures subgrid position relative to the centre grid point within each element (the $j$th~element lies between $\theta=\pm\pi$).
The $k=0$ mode, corresponding to a constant eigenfunction in each element, is linearly neutral as its decay rate~$\beta_0=0$\,.
Thus, linearised about each equilibria in the subspace~$\mathbb E_0$, subgrid structures within each element decay so that a global piecewise constant field emerges exponentially quickly---at least as fast as~$\exp({-\beta' t})$ for any~$\beta'<\beta\leq\pi^2/H^2$ which separates the emergent slow mode~$\beta_0$ from the decaying modes.

\ncm\ theory asserts that such emergence is robust to nonlinear and nonautonomous perturbations.
To address nonautonomous effects we need to expand them in a complete set of basis functions.
Because we embed the dynamics in overlapping elements with `overlapping' eigenfunctions, such as~\eqref{eq:eigfns}, choose a subset~$\eigf_k(x-\xj)$ of the eigenfunctions that are complete over the non-overlapping domains $|x-\xj|<H/2$; for \bspde\ decompose the nonautonomous effects within each element as a linear combination of 
\begin{equation}
    \eigf_k(x-\xj)= \cosin k\theta=\left\{
    \begin{array}{ll}
        \cos k\theta  &\text{for even }k,\\
        \sin k\theta  &\text{for odd }k.
    \end{array}
    \right.
    \label{eq:four}
\end{equation}
This decomposition is analogous to Example~5.2.2 of Da~Prato \& Zabczyk~\cite[see also p.259]{DaPrato96b}.
Thus additive nonautonomous forcing terms in the element \npde~\eqref{eq:spdei} for the \bspde\ are
\begin{equation}
    \phi_j(x,t)=\sum_{k=0}^\infty \phi_{j,k}(t)\eigf_k(x-\xj)
    =\sum_{k=0}^\infty \phi_{j,k}(t)\cosin k\theta\,,
    \label{eq:onoise}
\end{equation}
where $\phi_{j,k}$ denotes the nonautonomous dynamics of the $k$th~wavenumber in the $j$th~element. 
%\emph{Assume the nonautonomous dynamics~$\{\phi_{j,k}\}$ are independent.} 
Simple numerical methods, such as Galerkin projection onto the coarsest mode~$\eigf_0$, would ignore the subgrid modes~$\eigf_k$, $k\geq1$\,, of the nonautonomous forcing~\eqref{eq:onoise} and hence miss subtle but potentially important subgrid and inter-element interactions such as those seen in the models \eqref{eq:bssdm} and~\eqref{eq:lowg}.
In contrast, the systematic nature of our application of \ncm\ theory accounts for subgrid microscale interactions as an asymptotic series in the nonautonomous amplitude~$\varepsilon$, inter-element coupling~$\gamma$ and  nonlinearity~$\alpha$.

%There are two properties of the \nsm\  which are key to our multiscale modelling. 
%Firstly, that a \nsm\ exists and secondly, that the \nsm\ does indeed describe the emergent long term dynamics of the original microscale problem. 
%We now discuss these two properties in the context of the \npdee, with particular reference to the limitations of the current theory and how such limitations may be overcome.  

\subsection{A nonautonomous slow manifold exists} 
\label{sec:exists}
The nonlinear forced \npde~\eqref{eq:spdei} with inter-element coupling conditions~\eqref{eq:ibc} linearises to the dissipative \pde~\eqref{eq:spdel}.
To account for nonzero parameters, adjoin the system of three trivial \textsc{de}s $d\pars/dt =\vec 0$\,, where $\pars=(\varepsilon,\gamma,\alpha)$\,. 
The subspace of equilibria is now~$\{(\vec u,\pars)\mid \vec u\in\mathbb E_0,\ \pars=\vec 0\}$.
In the extended state space~$(\vec u,\pars)=(u_1,\ldots,u_m,\varepsilon,\gamma,\alpha)$, the linearised \pde\ has $m+3$~eigenvalues of zero and all other eigenvalues have negative real part~$\leq-\beta$: for example, this upper bound is $-\pi^2/H^2$ for the \bspde.
Thus the \npde~\eqref{eq:spdei} has an $m+3$~dimensional slow subspace (a slow subspace is characterised by zero eigenvalues instead of the more general centre manifold character of eigenvalues of zero real-part).  
Because of the pattern of the eigenvalues, \ncm\ theory (\cite[Remark~6.2]{Aulbach96} and~\cite[Chapt.~2]{Haragus2011})
%for deterministic systems~\cite[Theorem 3.4]{Chicone97}, \cite[Theorem 3.5]{Potzsche2006} 
assures us a corresponding $m+3$~dimensional nonautonomous slow manifold (\nsm), tangent to the slow subspace, exists under certain conditions.

\begin{assumption} \label{ass:aw}
In addition to the spectral assumption~\ref{ass:sss},
assume \(\cL(U_j)\) is locally integrable, and $\cL$,  $f$~and~$\phi$ are~\(C^p\) in~\(u\) and strongly measurable in~\(t\) \cite[B1 and~B2]{Aulbach2000}.
\end{assumption}

\begin{corollary}[existence]\label{lem:exist}
Under assumption~\ref{ass:aw}, then in some finite neighbourhood~\(N\) of the subspace of equilibria,~$(\mathbb E_0,\vec 0)$, there exists a \(C^p\), $m+3$~dimensional, nonautonomous slow manifold~\(\cM_0\) for the general \npde~\eqref{eq:spdei} \cite[Theorem~2.9]{Haragus2011} in which the field in the $j$th~element is 
\begin{equation}
u_j(x,t)=v_j(\vec U,x,t,\pars)
\quad\text{such that}\quad
\dot \uj =g_j(\vec U,t,\pars),
\label{eq:smx}
\end{equation}
for some function~$g_j$, and where the $j$th component~$\uj(t)$ of vector~$\vec U$ measures the amplitude of the neutral mode~$\eigf_0(x-\xj)$ in the $j$th~element.
Further, \(\cM_0\)~contains the set of bounded solutions of~\eqref{eq:spdei} which stay in~\(N\) for all time \(t\in\RR\).
\begin{itemize}
\item If additionally $\cL$,  $f$ and~$\phi$ are Lipschitz, with small enough Lipschitz constant, and bounded \cite[B2]{Aulbach2000}, then the slow manifold exists globally in state space.
\item For the case of stochastic~\(\phi\) additional conditions are currently required for the existence of a \nsm{}~\cite[Theorem~3, p.212]{Roberts05c}:  
either the nonlinear \npde~\eqref{eq:spdei} is effectively finite dimensional\footnote{A \npde\ is effectively finite dimensional if there exists a wavenumber~$K$ such that modes~$\eigf_k$ for $k\geq K$ do not affect, through~$\cL$, $f$~or~$\phi$, the dynamics for modes with $0\leq k<K$\,.}~\cite[Theorems 5.1 and~6.1]{Boxler89}; or
the nonautonomous forcing in the \npde~\eqref{eq:spdei} is multiplicatively linear in~$u$, $\phi=u\psi(x,t)$ \cite[Theorem~A]{Wang06}.
\end{itemize}
\end{corollary}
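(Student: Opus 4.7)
The plan is to verify the hypotheses of the invoked nonautonomous centre-manifold theorems (Haragus--Iooss~\cite[Theorem~2.9]{Haragus2011}, Aulbach--Wanner~\cite[Theorem~6.1]{Aulbach96}) for the augmented system formed by the $m$~element \npde{}s~\eqref{eq:spdei} subject to the \ibc{}s~\eqref{eq:ibc}, together with the trivial parameter equations $d\pars/dt=\vec 0$\,. First I would cast this augmented system as an abstract evolution equation on a product Banach space $X_1\times\cdots\times X_m\times\RR^3$, where each~$X_j$ is a suitable Sobolev-type space of admissible microscale fields on $|x-\xj|\leq H$; the \ibc{}s, being inhomogeneous in~$\gamma$, are absorbed into the domain of the linear part once~$\pars$ is treated as a state variable, which the trivial parameter equations legitimise while preserving $C^p$ structure.

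Next I would linearise at an arbitrary base point $(\vec U^*,\vec 0)\in(\mathbb E_0,\vec 0)$. At $\pars=\vec 0$ the elements decouple into $m$ independent copies of the dissipative problem~\eqref{eq:spdel}, each contributing exactly one zero eigenvalue (the constant mode~$\eigf_0$) with the remainder of the spectrum strictly to the left of~$-\beta$ by Assumption~\ref{ass:sss}; the three parameter equations add three further zeros. Hence the slow subspace has dimension exactly $m+3$, separated from the stable subspace by a uniform gap of size at least~$\beta$---the dichotomy required by the theorems. The smoothness hypotheses B1--B2 of~\cite{Aulbach2000} (local integrability of $\cL(U_j)$ in~$t$, $C^p$ dependence in~$u$, strong measurability of $f$ and~$\phi$) are precisely Assumption~\ref{ass:aw}. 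A standard cut-off localises the nonlinearity, so that~\cite[Theorem~2.9]{Haragus2011} delivers a $C^p$, $(m{+}3)$-dimensional integral manifold~$\cM_0$ in a neighbourhood~$N$ of $(\mathbb E_0,\vec 0)$, tangent to the slow subspace at the base point. Parametrising~$\cM_0$ as a graph over the slow coordinates---with $\uj(t)$ defined as the projection of $u_j(\cdot,t)$ onto~$\eigf_0$ within the $j$th~element---reproduces the representation $u_j=v_j(\vec U,x,t,\pars)$ and reduced dynamics $\dot\uj=g_j(\vec U,t,\pars)$ of~\eqref{eq:smx}; the containment of every orbit that stays in~$N$ for all $t\in\RR$ is the standard trapping property of integral manifolds~\cite[Remark~6.2]{Aulbach96}.

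The Lipschitz bullet follows by omitting the cut-off once the small global Lipschitz constant of~\cite[B2]{Aulbach2000} is assumed, so that the same graph-transform fixed-point argument converges on all of state space. The two stochastic bullets are obtained by invoking the corresponding sharper results: Boxler~\cite[Theorems~5.1 and~6.1]{Boxler89} in the effectively finite-dimensional case, and Wang~\cite[Theorem~A]{Wang06} for multiplicatively linear noise; each hypothesis reduces again to the spectral gap of Assumption~\ref{ass:sss} plus an appropriate integrability property of the driver. The hard part will be choosing the function spaces~$X_j$ carefully enough that $\cL(U_j)$ is densely defined, closed, and has the claimed discrete dissipative spectrum \emph{after} the nonlocal \ibc{}s have been built into its domain, and that the $\gamma$-dependent coupling does not spoil the uniform $C^p$ regularity of $\cL$,~$f$ and~$\phi$ in a neighbourhood of the equilibrium subspace; once that functional-analytic scaffolding is in place, invocation of the cited existence theorems is mechanical.
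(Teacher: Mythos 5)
Your proposal follows essentially the same route as the paper: adjoin the trivial parameter equations, linearise about the piecewise-constant equilibria to exhibit $m+3$ zero eigenvalues separated by the gap~$\beta$ of Assumption~\ref{ass:sss}, check that Assumption~\ref{ass:aw} supplies the smoothness/measurability hypotheses, apply a cut-off for locality, and then invoke the cited theorems of Haragus--Iooss and Aulbach--Wanner (with Boxler and Wang for the stochastic bullets). The functional-analytic scaffolding you flag as the hard part is exactly the issue the paper deliberately sidesteps by declining to fix a precise Banach space, so your account is, if anything, more candid about where the rigour must be supplied.
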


\paragraph{Coarse grain nonautonomous lattice dynamics}
Instead of a field continuous in space~$x$, suppose the microscale quantities, including the nonautonomous effects,  are known on a microscale lattice indexed by integer~$i$ as in the spatially discrete system~\eqref{eq:bssde}: for definiteness say $x_i=iH/2$  so that the coarse grid~$X_j$ has twice the spacing of the fine grid~$x_i$ and $X_j=x_{2j}$.  
Then the microscale field is not the $L$-periodic continuum field~$u(x,t)$ (where $L=mH$) but the $2m$-periodic discrete~$u_i(t)$ with $u_{i+2m}(t)=u_i(t)$.
Following a previous linear, autonomous exploration~\cite{Roberts08c}, consider the nonautonomous lattice dynamics~\npdee\ when the dissipative operator is the specific second central difference $\cL u_i(t)=(4/H^2)\left[u_{i+1}(t)-2u_i(t)+u_{i-1}(t)\right]$ with some general nonlinearity~$f$ and nonautonomous effects~$\phi$.
Embed the lattice dynamics onto $m$~overlapping elements to form the system~\eqref{eq:spdei} at internal lattice points and with the same coupling conditions~\eqref{eq:ibc}: the $j$th~element consists of microscale lattice points $x_i\in\{X_j, X_j\pm\frac12H,X_j\pm H\}$ since $i=2j,2j\pm 1,2j\pm2$\,.  
Such a discrete system has a set of piecewise constant equilibria~$(\vecs E_0,\vec 0)$, and about each of the $\vec u$ equilibria the linearised dynamics produce the same dissipative equation~\eqref{eq:spdel} as when~$x$ is continuous.
The spectrum on each element, due to the second central difference~$\cL_j$, is then simply $\{0,-\beta_1,-\beta_2\}=\{0,-8/H^2,-16/H^2\}$ corresponding to microscale lattice eigenmodes (wavelets perhaps) on each element of $\eigf_0=(1,1,1,1,1)$, $\eigf_1=(0,-1,0,1,0)$ and $\eigf_2=(1,-1,1,-1,1)$, respectively~\cite{Roberts08c}.
Consequently, the Existence Corollary~\ref{lem:exist} then guarantees that for certain classes of lattice nonlinearity~$f$ and nonautonomous effects~$\phi$ there exists a \nsm\ describing an in principle exact closure, such as the approximate~\eqref{eq:bssdm}, of the microscale nonautonomous dynamics on the coarse macroscale grid values.

It will not escape your notice that such sound mapping of nonautonomous dynamics from a fine grid to a grid a factor of two coarser, such as \eqref{eq:bssde}$\mapsto$\eqref{eq:bssdm}, may be iterated across all grid scales on an entire multigrid hierarchy.
This approach has the potential to explore nonautonomous microscale dynamics at any scale, and to strongly relate dynamics across any scales.
Thus this approach contributes to the need identified by Dolbow et al.~\cite[p.4]{Dolbow04} for ``representing information transfer across levels of scale'', and the more recent call by Brown et al.~\cite[p.14]{Brown08} for ``adaptive multiscale discrete stochastic simulation methods that are justified by theory and which can automatically partition the system into components at different scales'', as our quite general treatment of the nonautonomous effects~$\phi$ permits a stochastic form.

\subsection{The nonautonomous slow manifold captures emergent dynamics}
The second key property of \nsm{}s is that the evolution on the \nsm\ does capture the long term dynamics of the original \npdee, apart from exponentially decaying transients.
For example, all solutions of \bspde{} close enough to the origin are are exponentially quickly described by the discrete \npde{}s~\eqref{eq:lowg}.
This strong theoretical support for the model holds at finite element size~$H$---it ensures an accurate closure for the macroscale discretisation.

For nonautonomous systems, such emergence of the \nsm\ is most clearly seen via time dependent, normal form, coordinate transforms~\cite[e.g.]{Arnold98, Roberts06k, Roberts07d}.
Such a normal form coordinate transform underlies the topological equivalence that establishes the following corollary on emergence.

%SOME PEOPLE, rephrased in the following corollary, assure us that, for a deterministic \npde{}, the dynamics on the \nsm\ emerges exponentially quickly. 
%The same holds for the stochastic case, as shown by Boxler's Relevance Theorem~7.1(i) \cite{Boxler89}, or \cite[Theorem~4, p.213]{Roberts05c}, and Wang \& Duan's Theorem~A \cite{Wang06} (see also Theorem~2.3(iii) by Bensoussan \& Flandolfi~\cite{Bensoussan95}).

%\jeb{need to fix  references here} 
\begin{corollary}[emergence] \label{lem:rel} 
For the conditions of Corollary~\ref{lem:exist}, the finite neighbourhood~$N$ of the equilibria~$(\mathbb E_0,\vec 0)$ may be chosen such that while each solution~$u_j(x,t)$ of the \npde~\eqref{eq:spdei} remains in the neighbourhood~$N$,  there exists a solution~$\vec U(t)$ on the \nsm~\(\cM_0\) such that $\|u_j(x,t)-v_j(\vec U,x,t,\pars)\|=\Ord{e^{-\beta't}}$ as as time~$t$ increases.
\end{corollary}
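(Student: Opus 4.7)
The plan is to establish emergence through the nonautonomous analogue of the standard argument for autonomous centre manifolds: a time-dependent topological conjugacy near $(\mathbb E_0,\vec 0)$ that reduces the embedded system~\eqref{eq:spdei}--\eqref{eq:ibc} to a decoupled dynamics whose slow part carries the flow on~$\cM_0$ and whose complementary part is linearly contracting at rate at least~$\beta$. The key input is the Aulbach--Wanner topological equivalence theorem~\cite[Theorem~4.1]{Aulbach2000}, which under Assumption~\ref{ass:aw} provides precisely such a $C^0$ conjugacy in a neighbourhood of the slow invariant manifold, together with its associated stable fibration.

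First I would fix a decay rate $\beta'\in(0,\beta)$ strictly inside the spectral gap guaranteed by Assumption~\ref{ass:sss}, and set up the splitting of the linearised operator on each element into the $(m+3)$-dimensional slow subspace (spanned per element by the constant eigenfunction~$\eigf_0$ together with the three parameter directions) and its stable complement whose spectrum lies in $\Re\lambda\leq-\beta$. I would then shrink~$N$ if necessary so that the cut-off procedure of Haragus and Iooss~\cite[Chapt.~2]{Haragus2011} renders the nonlinear, inter-element-coupled, and nonautonomous perturbations globally Lipschitz with a sufficiently small Lipschitz constant. This places us in the hypotheses under which Corollary~\ref{lem:exist} yields the manifold~$\cM_0$, and under which the same framework simultaneously produces a continuous invariant foliation of~$N$ by stable fibres transverse to~$\cM_0$.

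Next, for a trajectory $u_j(\cdot,t)$ of~\eqref{eq:spdei}--\eqref{eq:ibc} that remains in~$N$, I would identify $\vec U(t)$ as the projection of $u_j(\cdot,t)$ onto~$\cM_0$ along the stable fibre through it, equivalently the image under the Aulbach--Wanner conjugacy of the slow coordinate of the conjugated trajectory. Invariance of both the manifold and the foliation under the flow ensures that $\vec U(t)$ is itself a solution of the induced equation~$\dot\uj=g_j(\vec U,t,\pars)$ in~\eqref{eq:smx}. The conjugated stable coordinate satisfies a skew-product equation whose linear part has exponential dichotomy with rate at least~$\beta$, so a variation-of-constants estimate on the corresponding Perron integral equation bounds this coordinate by a constant times~$e^{-\beta' t}$. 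Transporting this bound back through the conjugacy, which is H\"older on bounded sets, yields the claimed estimate $\|u_j(x,t)-v_j(\vec U,x,t,\pars)\|=\mathcal{O}(e^{-\beta' t})$.

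The main obstacle is the nonautonomous character of the forcing: because $\varepsilon\phi$ is only strongly measurable in~$t$, the autonomous arguments based on time-independent projectors and analytic semigroups must be replaced by the exponential dichotomy machinery of P\"otzsche--Rasmussen~\cite{Potzsche2006} type, and the resulting invariant foliation is merely continuous in~$t$. Preserving the full rate $\beta'$ arbitrarily close to~$\beta$ through this weaker regularity, and keeping constants uniform as the trajectory approaches the boundary of~$N$, are the delicate technical points; the rest is a direct appeal to the cited \ncm\ machinery.
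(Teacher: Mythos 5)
Your proposal is correct and follows essentially the same route as the paper: both rest on the Aulbach--Wanner topological equivalence theorem~\cite[Theorem~4.1]{Aulbach2000}, localised via the cut-off construction of Haragus and Iooss to a finite neighbourhood~$N$, from which the exponential attraction at some rate~$\beta'\leq\beta$ follows because the stable dynamics are conjugate to the decaying linearised system~\eqref{eq:spdel}. The only substantive difference is that the paper's proof additionally notes (via Boxler's Theorem~7.1(i)) that the decay rate is comparable in the stochastic-forcing case covered by Corollary~\ref{lem:exist}, which your argument omits; your extra care about the H\"older regularity of the conjugacy needed to preserve the rate is a point the paper glosses over.
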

\begin{proof}
In the case of Lipschitz and bounded~\(\cL\), \(\vec f\) and~\(\phi\), Aulbach and Wanner~\cite[Theorem~4.1]{Aulbach2000} proved the topological equivalence between the nonlinear dynamics of the stable modes and the linearised stable modes, hence the slow manifold emerges globally.
For the more general case, invoke a cut-off function (\cite[Remark~6.2]{Aulbach96}, \cite[\S B.1]{Haragus2011}) and immediately establish the local existence of a finite neighbourhood~\(N\) in which the topological equivalence occurs.
Inside~\(N\) the stable modes are topologically equivalent to the decaying solutions of the linearised system~\eqref{eq:spdel}.
Consequently, there exists a decay rate~\(\beta'\leq\beta\) of attraction to a solution on~\(\cM_0\).  
Boxler also assures us that for stochastic forcing the rate of decay to the \nsm\ is comparable to the deterministic case~\cite[Theorem~7.1(i)]{Boxler89}.
\end{proof}

For example, in \bspde\ on times significantly larger than a cross element diffusion time~$\beta_1^{-1}=H^2/\pi^2$, the exponential transients decay and the \nsm\ model~\eqref{eq:lowg} describes the dynamics.
Similarly, the transients of the discrete microscale dynamics~\eqref{eq:bssde} decay on a time scale of~$\beta_1^{-1}=H^2/8$ to the \nsm\ model~\eqref{eq:bssdm}.

Subtleties in this Emergence Corollary mislead some researchers, even when modelling deterministic systems.
For example, Givon et al.~\cite{Givon04} discuss finite dimensional deterministic systems which linearly separate into slow modes~$x(t)$ and fast, stable modes~$y(t)$. 
They identify the existence of a slow exponentially attractive invariant manifold $y=\eta(x)$ and approximate the effective low~dimensional evolution on the manifold as $\dot X=L_1X+f(X,\eta(X))$\,.
However, they \emph{assume}~\cite[p.R67, bottom]{Givon04} that the initial condition for the evolution on the slow manifold is simply $X(0)=x(0)$, and then consequently, just after their~(4.5), have to place undue restrictions on the possible initial conditions.
However, the source of such restriction is that in general $X(0)\neq x(0)$ since the initial condition is not required to lie on the slow manifold (it simply needs to be in the neighbourhood of the manifold). 
Physicists sometimes call the difference between $x(0)$~and~$X(0)$ the `initial slip'~\cite[e.g.]{Grad63}.
For nonautonomous systems, the correct nontrivial projection of initial conditions onto the \nsm\ may be constructed via nonautonomous normal forms~\cite[e.g.]{Arnold98, Roberts06k, Roberts07d} and applies in some finite domain around the subspace~$(\vecs E_0,\vec 0)$.

There are two caveats to our application of Corollary~\ref{lem:rel} to discretising \npde{}s.
Firstly, although our constructed asymptotic series are global in the $m$~coarse macroscale amplitudes~$\vec U$, they are local in the parameters $\pars=(\varepsilon,\gamma,\alpha)$: the rigorous theoretical support applies in some finite neighbourhood of $\pars=\vec 0$\,.
At this stage we have little information on the size of that neighbourhood.
In particular, we evaluate the model when coupling parameter $\gamma=1$ to recover a discrete model for fully coupled elements; thus we require $\gamma=1$ to be in the finite neighbourhood of validity.
For example, the truncation in powers of the coupling parameter~$\gamma$ controls the width of the computational stencil for the discrete models.
Due to the form of the coupling conditions~\eqref{eq:ibc}, nearest neighbour elements interactions are flagged by terms in~$\gamma^1$, whereas interactions with next to nearest neighbouring elements occur as $\gamma^2$~terms, and so on for higher powers.
The low accuracy models \eqref{eq:bssdm} and~\eqref{eq:lowg} are constructed with error~$\Ord{\gamma^2}$ and so only encapsulate interactions between the dynamics in an element and those of its two adjoining neighbours.
Construction to higher orders in coupling~$\gamma$ accounts for interactions between more neighbouring elements.
%Such validity has been demonstrated for the deterministic diffusion equation and Burgers' equation~\cite{Roberts98a}, but not yet for the stochastic case.
Secondly, we cannot construct the \nsm\ and the evolution thereon exactly; it is difficult enough constructing asymptotic approximations such as the low order accuracy models \eqref{eq:bssdm} and~\eqref{eq:lowg}.
The models we develop and discuss have an error due to the finite truncation of the asymptotic approximations in the small parameters~$\pars$.
Nonetheless, Haragus and Iooss~\cite[Corollary~2.12]{Haragus2011} establish that our description~\eqref{eq:smx} of the slow manifold should simply satisfy the \npde~\eqref{eq:spdei} and coupling conditions~\eqref{eq:ibc} via straightforward application of the chain rule.

\section{Nonlinear dynamics have irreducible microscale interactions}
\label{sec:quad}
Using the specific example of the forced \bspde, we now explore typical issues arising in constructing macroscale discretisations of the quite general nonlinear  \npdee, issues that arise for both differential and difference \npde{}s.

\subsection{Separate products of convolutions}
\label{sec:spc}

As detailed elsewhere~\cite[\S3.1]{Roberts06g}, iteration is a powerful and flexible method to construct the \nsm. 
The aim is to construct the functions~\(v_j\) and~\(g_j\) of the \nsm~\eqref{eq:smx} that satisfy, to some order of error, the governing equations \cite[Corollary~2.12]{Haragus2011}.
Suppose that at some iterate we know $u_j(x,t)\approx v_j(\vec U,x,t,\pars)$ such that $d\uj/dt\approx g_j(\vec U,\pars)$ correct to some order of error. 
We use the residuals of the governing \npde~\eqref{eq:spdei} and coupling conditions~\eqref{eq:ibc} to derive corrections of~$v_j$ and~$g_j$; that is, we seek corrections~$v_j'$ and~$g_j'$ where \(u_j\approx v_j+v'_j\) and \(d{U_j}/dt\approx g_j+g'_j\) is a better approximation to the \nsm.
To make a better approximation, the corrections must satisfy~\cite[\S3.1]{Roberts06g}
\begin{equation}
    \D t{v'_j}-\cL_j{v'_j}+g'_j
    =\text{residual}_{\eqref{eq:spde}}\,.
    \label{eq:diffd}
\end{equation}
In analysing a nonlinear \npdee, such as the  forced \bspde, products of memory convolutions appear in the residual.
It seems appropriate, for reasons discussed further on, to seek terms at most quadratic in the nonautonomous coefficient~$\varepsilon$.
Thus we generally have to deal with quadratic products of multiple convolutions in time.

\paragraph{Obtain corrections from residuals}

To cater for the general case, define multiple convolutions.
First, let the operator~$\Z{\kappa}$ denote convolution over past history with $\exp[-\beta_{\kappa} t]$ where for brevity~$\kappa$ denotes the pair~$jk$ corresponding to the $k$th~eigenvalue of the $j$th~element.
That is,
\begin{equation}
        \Z{\kappa}\phi_{\mu}(t)
	=\exp[-\beta_{\kappa}t]\star\phi_{\mu}(t) =\inti^t \exp[-\beta_{\kappa}(t-\tau)]
	\phi_{\mu}(\tau) \,d\tau\,,
    \label{eq:conv}
\end{equation}
where $\mu$, like $\kappa$, represents an eigenvalue and element  pair; 
recall that $\beta_{\kappa}$~is the (positive) decay rate of the $k$th~mode within the $j$th~element; $\beta_{jk}=k^2\pi^2/H^2$ for the \bspde, independent of the element.
%Da~Prato \& Zabczyk~\cite[\S5.2.1]{DaPrato96} discuss the existence and continuity of such stochastic convolutions.
Second, let $\Z{\vec {\kappa}}$ denote the operator of multiple convolutions in time where components of the subscript vector~$\vec {\kappa}$ indicate the decay rates of the multiple convolutions, that is, the operator
\begin{equation}
    \Z{\vec {\kappa}}=\Z{({\kappa}_1,{\kappa}_2,\ldots)} =\exp(-\beta_{{\kappa}_1}t)\star
    \exp(-\beta_{{\kappa}_2}t)\star \cdots\star
    \quad\text{and}\quad
    \Z{\kappa}=1\,,
    \label{eq:Z}
\end{equation}
in terms of the convolution~\eqref{eq:conv}; consequently
\begin{equation}
    \partial_t \Z{({\kappa}_1,{\kappa}_2,\ldots)}
    =-\beta_{{\kappa}_1} \Z{({\kappa}_1,{\kappa}_2,\ldots)}
    +\Z{({\kappa}_2,\ldots)}\,.
    \label{eq:Zt}
\end{equation}
The order of the convolutions does not matter\footnote{
This can be seen by writing $\phi_{\mu}(t)$ in terms of its Fourier transform and then evaluating all time integrals in the convolution; the solution is symmetric in all $\beta$.}~\cite{Roberts05c}; 
however, keeping intact the order of the convolutions seems useful to most easily cancel like terms in the residual of the governing \npde.

Earlier work~\cite[\S3.3--4]{Roberts06g} explored updates $v'_j$~and~$g'_j$ to the subgrid slow manifold field and the discrete model \npde\, such as  \eqref{eq:bssde}~\cite[e.g.]{Roberts08c} with linear diffusion, although only autonomous~$\phi$ was considered in the latter.
Summarising, in iteratively constructing the \nsm\ we encounter convolution integrals over the immediate past of the nonautonomous effects~$\phi$.
Such fast time `memory' convolutions must be removed from the dynamics of the discretisation $\dot{U}_j=g_j$\,: Givon et al.~\cite[p.R59]{Givon04} similarly comment ``Memory.
An important aim of any such algorithm is to choose~$P$ [the \nsm] in such a way that the dynamics in~$X$ [our grid values~$\vec U$] is memoryless.'' 
We simplify the discrete model tremendously by removing such `memory' convolutions as originally developed for \sde{}s by Coullet, Elphick \& Tirapegui~\cite{Coullet85}, Sri Namachchivaya \& Lin~\cite{Srinamachchivaya91}, and Roberts \& Chao~\cite{Chao95, Roberts03c}: the trick is to absorb the `memory' convolutions into the parametrisation of the \nsm, via~$v_j$, leaving the evolution on the \nsm,~$g_j$, to be convolution free.
The results are discrete models, such as~\eqref{eq:bssdm}, where the nonautonomous effects are correlated across neighbouring elements despite there being no correlations in the original linear diffusion \bspde~and~\eqref{eq:bssde}.

Nonlinear spatially extended problems, such as the \bspde, need to adapt extra techniques from \npde{}s on small spatial domains~\cite{Roberts05c}.
For nonlinear problems we additionally have to solve for nonautonomous corrections that are quadratic in~\(\phi\).
Generally the right-hand side of the correction equation~\eqref{eq:diffd} contains a sum of terms of the form
\begin{equation}
	\D t{v'_j}-\cL_j{v'_j}+g'_j = \varepsilon^2F_j(x)\Z{\vec{\lambda}}\phi_{\rho}\Z{\vec {\kappa}}\phi_{\mu}\,,
	\label{eq:wupnon}
\end{equation}
where $F_j(x)$ denotes complicated expressions encapsulating some of the influences of surrounding elements upon the subgrid structures within the $j$th~element.
Analogous to the treatment of linear nonautonomous effects~\cite[\S3.3--4]{Roberts06g}, two cases arise.
\begin{itemize}
    \item  Firstly, for each component of the subgrid
    structure~$F_j(x)$ in $\eigf_p(x-\xj)$ for wavenumber $p\geq
    1$\,, there is no difficulty in simply adding into the
    correction~$v'_j$ to the subgrid field the component
    \begin{displaymath}
        \varepsilon^2\eigf_p(x-\xj) \Z{jp}\left[ \Z{\vec{\lambda}}\phi_{\rho}\Z{\vec{\kappa}}\phi_{\mu} \right]
    \end{displaymath}
    with its extra convolution in time.

	\item Secondly, for the component in~$F_j(x)$ that is constant across an element, the $\eigf_0(x-\xj)$~component, we separate the part of $\Z{\vec \lambda}\phi_{\rho}\Z{\vec {\kappa}}\phi_{\mu}$ that has a bounded 	integral in time, and hence updates the subgrid field~$v'_j$, from the secular part that does not have a bounded integral, and hence must update the model \npde\ through~$g'_j$.
\end{itemize}
In the considerations and convolutions for either case, the surrounding grid values that appear in the spatial structure forcing~$F_j$ are treated as constants as the time derivative in~\eqref{eq:wupnon} is the partial derivative of~$v_j(\vec U,x,t,\pars)$ keeping the grid values~$\vec U$ constant.
The $g'_j$~term in~\eqref{eq:wupnon} accounts for the time derivatives of grid values~$\vec U$.

\paragraph{Integrate by parts to separate}

Integration by parts introduced in earlier research~\cite{Roberts03c, Roberts05c} also here reduces all non-integrable convolutions to the quadratic canonical form of the convolution.
Summarising, let's choose the canonical irreducible form for quadratic interactions to be $\phi_{\rho}\Z{\vec {\kappa}}\phi_{\mu}$\,.  
We rewrite the convolution \ode~\eqref{eq:Zt} as $\beta_{\kappa}\Z{{\kappa}.\vec {\kappa}'}=-\partial_t{\Z{{\kappa}.\vec {\kappa}'}}+\Z{\vec {\kappa}'}$\,, where the vector of convolution parameters is decomposed as $\vec {\kappa}={\kappa}\cdot\vec {\kappa}'$ so that ${\kappa}$~is the first component of vector~$\vec {\kappa}$, and $\vec {\kappa}'$~is the vector (if any) of the second and subsequent components of vector~$\vec {\kappa}$.
Then for any $\rho$ and~$\mu$, whether from the same element or not,
\begin{eqnarray*}
    \int \Z{\lambda\cdot\vec \lambda'}\phi_{\rho}  \Z{{\kappa}\cdot\vec {\kappa}'}\phi_{\mu}\,dt
    &=&-\frac1{\beta_{\kappa}+\beta_{\lambda}}\Z{\lambda\cdot\vec
    \lambda'} \phi_{\rho} \Z{{\kappa}\cdot\vec {\kappa}'}\phi_{\mu}
    \\&&{}
    +\frac1{\beta_{\kappa}+\beta_\lambda}\int \left(\Z{\vec {\lambda}'}\phi_{\rho}
    \Z{\kappa\cdot\vec \kappa'}\phi_{\mu} \right.
    \\&&{}
\left.   \quad\quad\quad\quad +\Z{{\lambda}\cdot\vec {\lambda}'}\phi_{\rho} \Z{{\vec \kappa}'}\phi_{\mu}\right) dt\,.
\end{eqnarray*}
Observe that each of the two components in the integrand on the right-hand side has one fewer convolutions than the initial integrand.
Thus one repeats this integration by parts until terms of the canonical form $\phi_{\rho}\Z{\vec {\kappa}}\phi_{\mu}$ in the integrand are reached.
In this reduction process, assign all the integrated terms to update the subgrid field~$v'_j$. 
The irreducible terms remaining in the integrand, those in the form $\phi_{\rho}\Z{\vec {\kappa}}\phi_{\mu}$, must thus go to update the nonautonomous evolution~$g'_j$.

Computer algebra~\cite[\S6]{Roberts06b} readily implements these steps in the iteration to derive the asymptotic series of the \nsm\ of an \npdee.

\subsection{Odd modes highlight nonlinear interactions}
\label{sec:onhnni}

Subgrid modes with even wavenumber~$k$ are the only modes to affect the  discrete macroscale model of linear diffusion~\cite[\S3.3]{Roberts06g}.
For \emph{nonlinear} dynamics, any term in the discrete macroscale model dependent on an odd subgrid mode must be a nonlinear effect.
Thus, restricting the nonautonomous effects~$\phi(x,t)$ to an odd subgrid structure within each element highlights the spatially extended dynamics of the \bspde\ arising from nonlinear dynamics.
These results clarify the modelling of subgrid nonlinear fluctuation effects in the general \npdee.

\begin{figure}
    \centering
\includegraphics{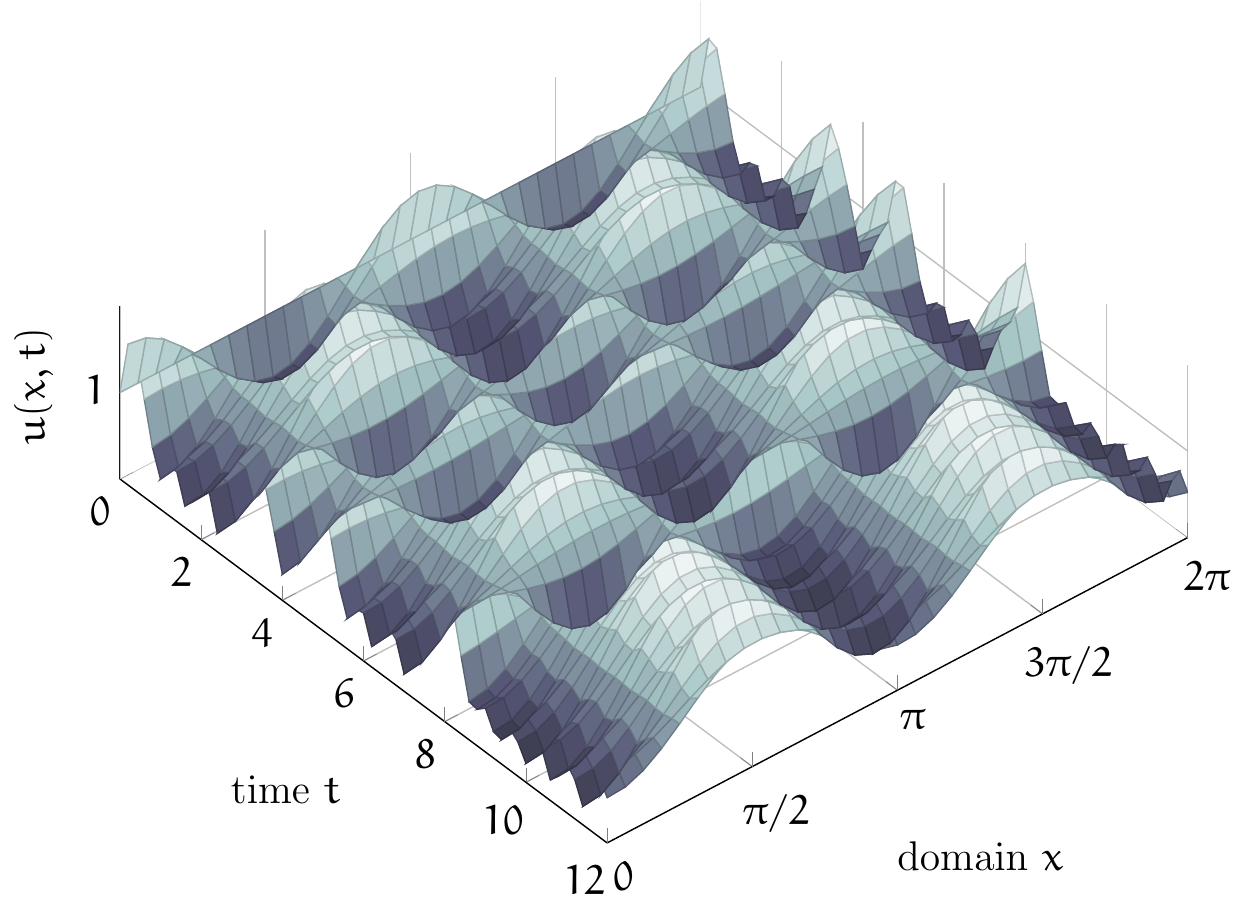}
	\caption{microscale simulation of one realisation of the $2\pi$-periodic \bspde\ with initial condition $u(x,0)=1$\,,  nonlinearity $\alpha=0.3$ and $\varepsilon=0.05$\,. 
The nonautonomous forcing is $\varepsilon\varphi(t)\cos 2x$ for $\varphi=\xi$ obtained from the Lorenz system. 
The Lorenz system is as in Figure~\ref{fig:micro}, with the exception that here there is only one Lorenz component across the fine space mesh and $\xi(0)=10$\,.
The macroscale grid points \(X_j=(2j-1)\pi/4\) are at the nodes of the forcing where the field~$u$ is relatively quiescent.}
    \label{fig:firsts}
\end{figure}

\paragraph{One correlated noise}
The simplest nontrivial case of \bspde\ is when the nonautonomous forcing~$\phi(x,t)$ has just the one odd Fourier component~$\sin\theta$ in each element~$j$, \emph{and} is perfectly but oppositely correlated in neighbouring elements~$j\pm 1$.
That is, in this section set the nonautomous dynamics associated with the first (and only) Fourier component of 
$\phi$ to $\phi_{j,1}=(-1)^j\varphi(t)$ in each element~$j$ so that 
\begin{equation}
    \phi_j(x,t)=(-1)^j\varphi(t)\sin\theta\,,
    \label{eq:first}
\end{equation}
for some smooth time dependent forcing~$\varphi(t)$. 
For example, Figure~\ref{fig:firsts} shows a microscale simulation of $2\pi$-periodic \bspde: as the macroscale element size $H=\pi/2$\,, the space-time structure of the nonautonomous forcing~\eqref{eq:first} reduces to $\phi(x,t)=\varphi(t)\cos2x$ across all elements~$j$. 
We proceed to model such microscale nonautonomous dynamics with four elements, with $H=\pi/2$\,, centred at $\xj=(j-\rat12)\pi/2$\,.
The nodes of the nonautonomous forcing~\eqref{eq:first} are at the grid points~$\xj$, that is, $\phi_j(\xj,t)=0$ for all $j$. 
Furthermore, the averages of the nonautonomous forcing across each of the four elements centred about the macroscale grid points~$\xj$ are zero.  
Most methods for discretising \bspde\ on these elements would sample or average over this nonautonomous forcing and predict it has no influence whatsoever on the macroscale grid evolution.
However, the subgrid scale nonlinear advection in \bspde\ carries the subgrid scale forcing past the grid points and so generates fluctuations along the grid values~$\xj$: these nonautonomous fluctuations in $u(\xj,t)$ are relatively small and hard to see in the simulation of Figure~\ref{fig:firsts}, but are clear in Figure~\ref{fig:firstg2s1ah} (red curve).
In contrast to most methods for discretising \npde{}s, our holistic discretisation provides a systematic closure for the subgrid microscale dynamics and so correctly predicts the spatially extended dynamics  which produce fluctuations in the macroscale grid values~$\xj$.  

\begin{figure}
    \centering
\includegraphics[width=12cm]{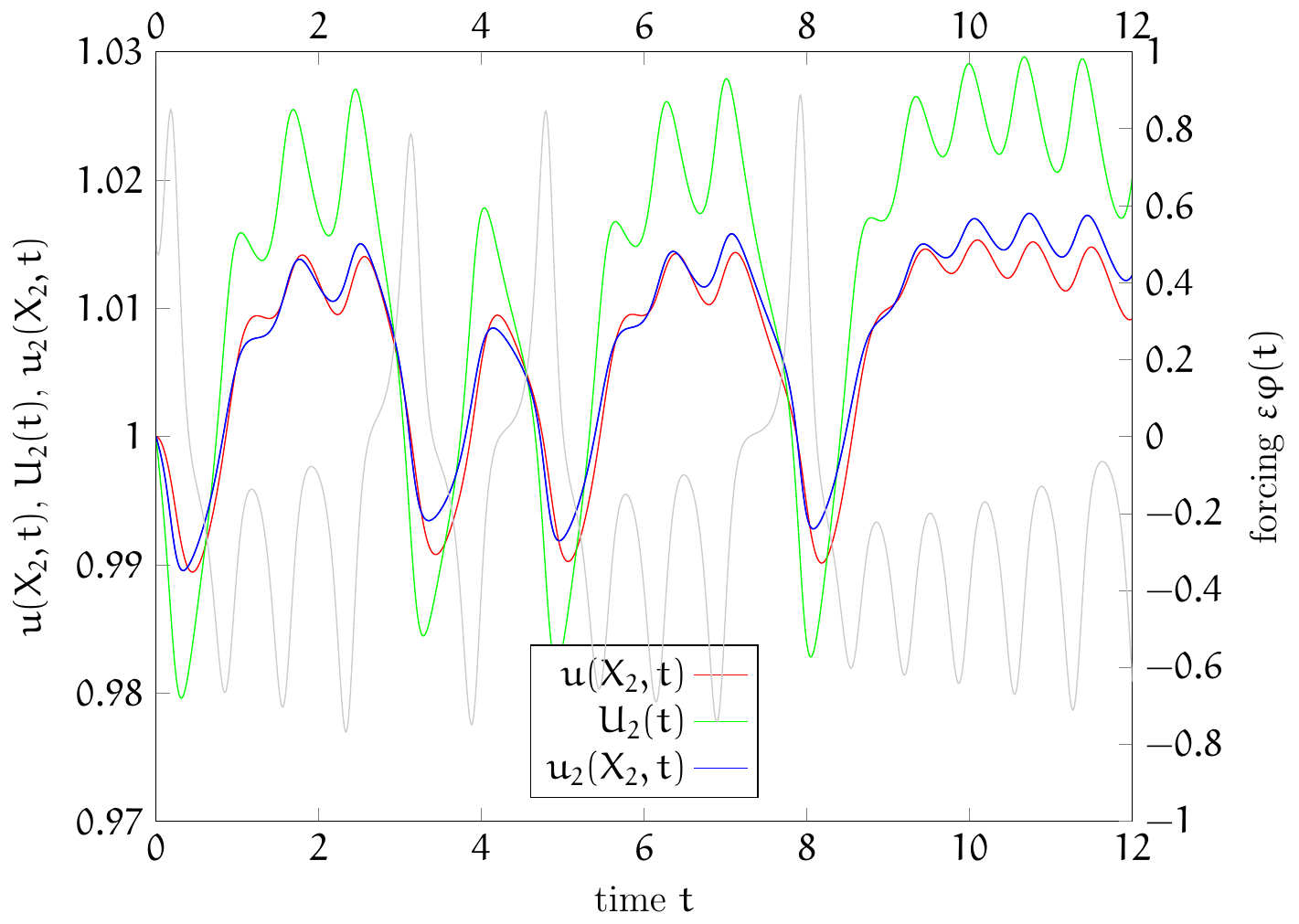}
	\caption{Compare the macroscale model~\eqref{eq:ssm1dt} with a microscale simulation for  nonlinearity $\alpha=0.3$, nonautonomous  amplitude $\varepsilon=0.05$ and for $\varphi$~defined by the Lorenz system (as in Figure~\ref{fig:firsts}): 	red, the microscale field $u(X_2,t)$ showing the subgrid forcing carried by nonlinear advection past the grid point; green, the macroscale variable~$U_2(t)$; and blue, the macroscale \nsm~\eqref{eq:ssm1g} at the grid point,~$u_2(X_2,t)=v_2(\vec U,X_2,t,\pars)$. 
The \nsm\ and the microscale simulation match well. 
The fluctuations in the fields follow those of the nonautonomous effects at $x=0$, $\varepsilon\varphi(t)$ (light gray), %with an initial lag of about $0.19$ time units, increasing approximately linearly to $0.30$ at the end of this simulation, 
despite the effects being strictly zero along~$X_2$, $\phi(X_2,t)=0$\,.}
    \label{fig:firstg2s1ah}
\end{figure}

Computer algebra~\cite{Roberts06b} constructs the \nsm\ of \bspde\ with interelement coupling~\eqref{eq:ibc}: just modify the code to the desired forcing~\eqref{eq:first}.
In each element, with nonautonomous effects induced and subgrid structures truncated to the first eight subgrid Fourier components~$1$, $\cosin\theta$, \ldots, $\cosin7\theta$\,, the subgrid microscale field is
\begin{subequations} \label{eq:ssm1m}%
\begin{eqnarray}&&
    u_j(x,t)=\uj
    +\gamma\Big[\frac{\theta}{\pi}\mud
    +\frac{\theta^2}{2\pi^2}\delta^2 \Big]\uj
    \pm\varepsilon\sin\theta\,\Z1\varphi
        \label{eq:ssm1ma}
\\&&{}
    \pm\varepsilon\alpha\uj\Big[
    \frac{2H}{\pi^2}\Z1 \label{eq:ssm1mb}
\\&&{}
    -\frac4H\big( \rat13\cos2\theta\,\Z{2,1} -\rat1{15}\cos4\theta\,\Z{4,1}
    +\rat1{35}\cos6\theta\,\Z{6,1} \big)\Big]\varphi
        \ \qquad\label{eq:ssm1mc}
\\&&{}
    +\Ord{\varepsilon^3+\alpha^3+\gamma^{3/2}}\,,
    \label{eq:ssm1md}
\end{eqnarray}%
\end{subequations}
where the upper alternative is for even~$j$, and the lower alternative for odd~$j$.  
The terms~\eqref{eq:ssm1mb}--\eqref{eq:ssm1mc} in the \nsm\ begin to account for the nonlinear advection and the interactions of subgrid spatial structures: these processes transform the additive forcing into multiplicative forcing,~$\uj\varphi$, with memory via the convolutions~$\Z{p,1}$. 
For conciseness, the subscripts of these and subsequent convolutions in this section only indicate eigenvalue modes and not the element number~$j$ since, for the decay rates of Burgers' \npde{}, $j$ is superfluous; $\beta_{jk}=\beta_k$ for all~$j$. 
Higher order terms in the coupling~$\gamma$, nonlinearity~$\alpha$ and forcing magnitude~$\varepsilon$ are too onerous to record for the microscale subgrid structures.

Potzsche and Rasmussen~\cite[Proposition~3.6]{Potzsche2006} justify the asymptotic error reported in the \nsm~\eqref{eq:ssm1m}.
This reported error comes from the termination criterion of the iterative construction of the \nsm.
Computer algebra~\cite{Roberts06b} iterates until the residual of the governing \bspde\ (or the fine lattice dynamics~\eqref{eq:bssde}), and the residual of the interelement coupling conditions~\eqref{eq:ibc}, are of some specified asymptotic order of error.
Potzsche and Rasmussen's~\cite{Potzsche2006} Proposition~3.6
(or in the case of stochastic dynamics, Boxler's~\cite{Boxler89} Theorem~8.1)
then guarantees that the \nsm\ model constructed by the computer algebra has the same asymptotic order of error, as reported in the \nsm~\eqref{eq:ssm1m}.  

One may straightforwardly check by hand the error of an \nsm, such as~\eqref{eq:ssm1m}, by direct substitution into \bspde\ and the coupling conditions~\eqref{eq:ibc}.
However, the algebraic details are enormous, reflecting the intricate subgrid scale interactions and including the transformations outlined in Section~\ref{sec:spc}, and would fill many pages with otiose algebraic expressions.
Table~\ref{tbl:newt} tabulates the number of terms of various orders for the evolution on the \nsm\ of the \bspde; the \ode{}s~\eqref{eq:ssm1dt} are one example of this family. 
The number of terms describing the shape of the \nsm---higher order versions of~\eqref{eq:ssm1m}---is several orders of magnitude more.
Surely it is far better to leave such intricate detail to a computer, and focus instead on the nonautonomous model, its theoretical support, and its implications as we do here.

The construction of the \nsm~\eqref{eq:ssm1m} proceeds `hand-in-hand' with constructing the evolution on the \nsm.
Iterating to higher order in the small parameters~$\pars$, computer algebra~\cite{Roberts06b} gives that with the simple nonautonomous effects~\eqref{eq:first}, the emergent evolution of the grid values~$\uj(t)$ on the \nsm~\eqref{eq:ssm1m} is the system of \npde{}s
\begin{subequations} \label{eq:ssm1dt}%
\begin{align}
    \duj ={}& \frac\gamma{H^2}\delta^2\uj
    -\frac{\gamma^2}{12H^2}\delta^4\uj
    -\alpha\frac{\gamma}{H}\uj\mud\uj
    +\alpha^2\frac\gamma{12}\uj^2\delta^2\uj
        \label{eq:ssm1dta}
\\&{}
    \mp\varepsilon\alpha H\Big[ 
    \frac{2}{\pi^2} \uj
    +\gamma\big( .1028\, \uj +.0716\, \delta^2\uj \big) 
    -.00363\,\alpha^2H^2\uj^3 \Big]\varphi
        \qquad\label{eq:ssm1dtb}
\\&{}
    +\varepsilon^2\alpha^2\Big[ -\frac{8}{\pi^2}\uj\varphi \big(
    \rat1{15}\Z{2,1} +\rat1{255}\Z{4,1} +\rat1{1295}\Z{6,1} \big)\varphi
        \label{eq:ssm1dtc}
\\&\quad{}
    + 0.0195\, {H^2}\uj\varphi \Z1\varphi \Big]
%     \nonumber\\&&{}
    +\Ord{\varepsilon^5+\alpha^5+\gamma^{5/2}}.
    \label{eq:ssm1dtd}
\end{align}%
\end{subequations}
The terms~\eqref{eq:ssm1dta} form the deterministic holistic discretisation of Burgers' equation~\cite{Roberts98a}.
The terms~\eqref{eq:ssm1dtb} describe effects linear in the forcing: as for linear diffusion~\cite[\S3.2--3]{Roberts06g}, we remove all memory convolutions from these terms.
However, the terms~\eqref{eq:ssm1dtc}--\eqref{eq:ssm1dtd} quadratic in the forcing,~$\varepsilon^2$, generally must contain memory convolutions (as discussed in Section~\ref{sec:spc}) in order to maintain the system of \npde{}s~\eqref{eq:ssm1dt} as a strong model of \bspde.

Figure~\ref{fig:firstg2s1ah} shows $U_2(t)$ (green curve) for one simulation of the discrete model \npde~\eqref{eq:ssm1dt} for forcing $|\varepsilon \varphi|<1$ (gray curve).
Compare these to the red curve of the microscale field~$u(X_2,t)$ at the corresponding grid point.
Although the overall trends are roughly similar, the macroscale variable~$U_2(t)$ is markedly different to~$u(X_2,t)$. 
How then can the \npde~\eqref{eq:ssm1dt} be a strong model?  
Resolve the difference by recalling that to eliminate memory convolutions we must abandon the freedom to impose precisely the meaning of the amplitudes~$\uj$ \cite[\S3, e.g.]{Roberts05c}.
Thus, generally, $U_2(t)\neq u(X_2,t)$\,.
Instead, the field at a grid point,~$u(\xj,t)$, is predicted by the \nsm~\eqref{eq:ssm1m} evaluated at the grid points, namely
\begin{eqnarray}
    u_j(\xj,t)&=&\uj
    \pm\varepsilon\alpha\uj\Big[ \frac{2H}{\pi^2}\Z1 -\frac4H\big(
    \rat13\Z{2,1} -\rat1{15}\Z{4,1} +\rat1{35}\Z{6,1}
    \big)\Big]\varphi
    \nonumber\\&&{}
    +\Ord{\varepsilon^4+\alpha^4+\gamma^2}\,.
    \label{eq:ssm1g}
\end{eqnarray}
Figure~\ref{fig:firstg2s1ah} plots the \nsm\ predicted grid value~\eqref{eq:ssm1g} in blue and displays good agreement with the microscale simulation (red).
Evidently, \ncm\ theory successfully supports discrete macroscale models of nonlinear \npde{}s.

\subsection{Strong models of nonautonomous dynamics are very complicated}

Now restore independent and multiple forcing processes in each element and consider the details of discretisations of the forced \bspde.
Computer algebra~\cite{Roberts06b} derives the following leading terms in the asymptotic series of the model $d\uj/dt=g_j(\vec U,t,\pars)$. The large amount of algebraic detail reflects the enormous complexity of the multiple physical interactions acting on the subgrid microscale structures controlled by the potentially rich spectrum of nonautonomous fluctuations.
The arguments of the next section simplify this model significantly.

Computer algebra~\cite{Roberts06b} derives that the element amplitudes~$\uj(t)$ evolve according to the system of \npde{}s
\begin{subequations}\label{eq:strongquad}%
\begin{align}
    &\duj=
    \gamma\frac1{H^2}\delta^2\uj 
    -\gamma^2\frac1{12H^2}\delta^4\uj
    -\gamma\alpha\frac1H\uj\mud\uj
    \label{eq:strongquada}\\&{}
    +\varepsilon\left\{ \vphantom{\frac83}
        \left[ 1 -\gamma\rat1{24}\delta^2
        +\gamma^2(\rat3{640}+\rat1{8\pi^4})\delta^4 \right]\phi_{j,0}
    \right.\nonumber\\&\left.\quad{}
        +\left[ \gamma\rat1{4\pi^2}\delta^2 -\gamma^2(\rat1{48\pi^2} 
        +\rat1{16\pi^4})\delta^4 \right]\phi_{j,2}
        -\alpha\frac{2H}{\pi^2}\uj\phi_{j,1}
    \right.\nonumber\\&\left.\quad{}
        +\alpha\gamma\frac{1}{H^2\pi^2}\left[  
            \uj\left( \rat8{\pi^2}\mud\phi_{j,0} 
                -\rat14\mud\phi_{j,2}
                +(\rat1{12}+\rat5{3\pi^2})\delta^2\phi_{j,1}
            \right)
    \right.\right.\nonumber\\&\left.\left.\qquad{}
            +\mud\uj\left( \rat14\phi_{j,2} 
                +(\rat16+\rat{10}{3\pi^2})\mud\phi_{j,1}
            \right)
    \right.\right.\nonumber\\&\left.\left.\qquad{}
            -\delta^2\uj\left( (\rat16+\rat1{3\pi^2})\phi_{j,1} 
                -(\rat1{24}+\rat5{6\pi^2})\delta^2\phi_{j,1} 
            \right)
        \right]
%    \right.\nonumber\\&&\left.\quad{}
        -\alpha^2\frac{8H^2}{3\pi^4}\uj^2\phi_{j,0}
    \right\}
    \label{eq:strongquadb}
    \\&{}
    +\varepsilon^2\left\{\vphantom{\frac11}
        \alpha\frac{H}{\pi^2}\left[ -2\phi_{j,0}\Z{1}\phi_{j,1}
            +\rat25\phi_{j,1}\Z{2}\phi_{j,2}
            +\rat25\phi_{j,2}\Z{1}\phi_{j,1} 
        \right]
    \right.\nonumber\\&\left.\quad{}
        +\alpha\gamma\frac1{H\pi^2}\left( -32\phi_{j,0}\Z{1,2}\mud
        -\rat45\phi_{j,1}\Z{2,2}\delta^2
        +\rat{32}5\phi_{j,2}\Z{1,2}\mud \right)\phi_{j,2}
    \right.\nonumber\\&\left.\quad{}
        +\alpha\gamma\frac{H}{\pi^2}\left[ 
            \phi_{j,0}\left( \rat8{\pi^2}\Z1\mud(\phi_{j,0}+\phi_{j,2}) 
            +(\rat1{12}+\rat5{3\pi^2})\Z1\delta^2\phi_{j,1}
    \right.\right.\right.\nonumber\\&\left.\left.\left.\quad\qquad{}
            -(\rat14+\rat8{\pi^2})\Z2\mud\phi_{j,2}
            \right)   
            +\phi_{j,1}\Z2\left( \rat15\delta^2\phi_{j,0}
            -(\rat1{20}+\rat{13}{150\pi^2})\phi_{j,2} \right)
    \right.\right.\nonumber\\&\left.\left.\qquad{}
            +\phi_{j,2}\left( -\rat8{5\pi^2}\Z1\mud(\phi_{j,0}+\phi_{j,2}) 
            -(\rat1{60}+\rat{17}{75\pi^2})\Z1\delta^2\phi_{j,1}
    \right.\right.\right.\nonumber\\&\left.\left.\left.\quad\qquad{}
            +(\rat18+\rat4{5\pi^2})\Z2\mud\phi_{j,2}
            \right)
    \right.\right.\nonumber\\&\left.\left.\qquad{}
            +\delta^2\phi_{j,0}\,\Z1\left(
            -(\rat1{12}+\rat2{15\pi^2})
            +(\rat1{24}+\rat5{6\pi^2})\delta^2 \right)\phi_{j,1}
    \right.\right.\nonumber\\&\left.\left.\qquad{}
            -\delta^2\phi_{j,1}\,\Z2\left(
            (\rat1{60}+\rat{17}{75\pi^2})
            +(\rat1{120}+\rat{17}{150\pi^2})\delta^2 \right)\phi_{j,2}
    \right.\right.\nonumber\\&\left.\left.\qquad{}
            -\delta^2\phi_{j,2}\,\Z1\left(
            (\rat1{20}+\rat{44}{75\pi^2})
            +(\rat1{120}+\rat{17}{150\pi^2})\delta^2 \right)\phi_{j,1}
    \right.\right.\nonumber\\&\left.\left.\qquad{}
            +\mud\phi_{j,0}\left(
            (\rat16+\rat{10}{3\pi^2})\Z1\mud\phi_{j,1}
            +(\rat14-\rat8{5\pi^2})\Z2\phi_{j,2} \right)
    \right.\right.\nonumber\\&\left.\left.\qquad{}
           -\mud\phi_{j,1} (\rat1{30}+\rat{34}{75\pi^2}) \Z2\mud\phi_{j,2}
    \right.\right.\nonumber\\&\left.\left.\qquad{}
            +\mud\phi_{j,2}\left(
            -(\rat1{30}+\rat{34}{75\pi^2})\Z1\mud\phi_{j,1}
            +(\rat18-\rat4{5\pi^2})\Z2\phi_{j,2} \right)
        \right]
    \right.\nonumber\\&\left.\quad{}
        +\alpha^2\frac1{\pi^2}\uj\left[ -\rat{16}3\phi_{j,0}\left(
            2\Z{1,2} +\rat{H^2}{\pi^2}\Z{2} \right)\phi_{j,2}
    \right.\right.\nonumber\\&\left.\left.\qquad{}
            -\rat8{15}\phi_{j,1}\left( \Z{2,1} -\rat{4H^2}{\pi^2}\Z{1}
            \right)\phi_{j,1} +\rat{16}{15}\phi_{j,2}\left( 2\Z{1,2}
            +\rat{H^2}{\pi^2}\Z{2} \right)\phi_{j,2}
        \right]
    \vphantom{\frac11}\right\}
   \label{eq:strongquadc}\\&{}
    +\Ord{\varepsilon^3,\alpha^3+\gamma^3}.
    \label{eq:strongquadd}
\end{align}%
\end{subequations}

\paragraph{The model resolves nonautomomous effects, nonlinearity and inter-element interactions} The discrete model \npde~\eqref{eq:strongquad} is computed to residuals $\Ord{\varepsilon^3,\alpha^3+\gamma^3}$ and hence, supported by Potzsche and Rasmussen~\cite[Proposition~3.6]{Potzsche2006}, the model has the same order of error \cite[Theorem~5, p.213]{Roberts05c}.
The \npde{}s~\eqref{eq:strongquad} built on earlier models of nonautonomous linear diffusion~\cite[\S3.4]{Roberts06g} are recovered when one sets the nonlinearity parameter $\alpha=0$ in~\eqref{eq:strongquad}.
The truncation to errors~$\Ord{\varepsilon^3}$ ensures the model retains the interesting mean effects generated by the quadratic convolution terms parametrised by~$\varepsilon^2$ seen in the braced terms~\eqref{eq:strongquadc}.
The truncation to error~$\Ord{\alpha^3+\gamma^3}$ resolves linear dynamics within and between next nearest neighbour elements, and nonlinear dynamics within and between nearest neighbour elements.

\begin{table}
    \newcommand{\non}[1]{\textit{#1}}
    \newcommand{\rrrr}{r@{\ \ }r@{\ \ }r@{\ \ }r}
    \centering
	\caption{number of terms in the evolution $d\uj/dt=g_j(\vec U,t,\pars)$ when only three Fourier modes are used for the subgrid forced structures: the numbers in \non{italics} report the terms evident in~\eqref{eq:strongquad}. 
The left, middle and right tables give the number of terms with nonautonomous coefficient $\varepsilon^0$, $\varepsilon^1$ and $\varepsilon^2$, respectively, with, in each case, coefficients up to power three in coupling~$\gamma$ and nonlinearity~$\alpha$.
Expect many more terms when using more Fourier modes.
Blank entries are unknown.}
    \begin{tabular}{ccc}
    \begin{tabular}{r|\rrrr}
      $\varepsilon^0$  & $\gamma^0$ & $\gamma^1$ & $\gamma^2$ & $\gamma^3$ \\
        \hline
        $\alpha^3$ & 0 & 0 & & \\
        $\alpha^2$ & \non0 & 3 & 14 &\\
        $\alpha^1$ & \non0 & \non2 & 8 & 19 \\
        $\alpha^0$ & \non0 & \non3 & \non5 & 7
    \end{tabular}
    &
    \begin{tabular}{r|\rrrr}
      $\varepsilon^1$  & $\gamma^0$ & $\gamma^1$ & $\gamma^2$ & $\gamma^3$ \\
        \hline
        $\alpha^3$ & 1 & 13 & & \\
        $\alpha^2$ & \non1 & 16 & 82 &\\
        $\alpha^1$ & \non1 & \non{11} & 45 & 93 \\
        $\alpha^0$ & \non1 & \non6 & \non{10} & 14
    \end{tabular}
    &
    \begin{tabular}{r|\rrrr}
   $\varepsilon^2$       & $\gamma^0$ & $\gamma^1$ & $\gamma^2$ & $\gamma^3$ \\
        \hline
        $\alpha^3$ & 9 & & & \\
        $\alpha^2$ & \non6 & 156 & &\\
        $\alpha^1$ & \non3 & \non{42} & 238 & \\
        $\alpha^0$ & \non0 & \non0 & \non0 & 0
    \end{tabular}
    \end{tabular}
 %       \begin{tabular}{ccc}
  %  \begin{tabular}{r|\rrrr}
   %     \multicolumn{5}{c}{$\varepsilon^0$} \\[1ex]
   %     $\alpha^3$ & 0 & 0 & & \\
    %    $\alpha^2$ & \non0 & 3 & 14 &\\
    %    $\alpha^1$ & \non0 & \non2 & 8 & 19 \\
    %    $\alpha^0$ & \non0 & \non3 & \non5 & 7 \\
    %    \hline
  %      & $\gamma^0$ & $\gamma^1$ & $\gamma^2$ & $\gamma^3$ 
  %  \end{tabular}
  %  &
  %  \begin{tabular}{r|\rrrr}
   %     \multicolumn{5}{c}{$\varepsilon^1$} \\[1ex]
  %      $\alpha^3$ & 1 & 13 & & \\
   %     $\alpha^2$ & \non1 & 16 & 82 &\\
    %    $\alpha^1$ & \non1 & \non{11} & 45 & 93 \\
    %    $\alpha^0$ & \non1 & \non6 & \non{10} & 14 \\
    %    \hline
   %     & $\gamma^0$ & $\gamma^1$ & $\gamma^2$ & $\gamma^3$ 
  %  \end{tabular}
 %   &
 %   \begin{tabular}{r|\rrrr}
  %      \multicolumn{5}{c}{$\varepsilon^2$} \\[1ex]
   %     $\alpha^3$ & 9 & & & \\
    %    $\alpha^2$ & \non6 & 156 & &\\
   %     $\alpha^1$ & \non3 & \non{42} & 238 & \\
   %     $\alpha^0$ & \non0 & \non0 & \non0 & 0 \\
   %     \hline
  %      & $\gamma^0$ & $\gamma^1$ & $\gamma^2$ & $\gamma^3$ 
  %  \end{tabular}
  %  \end{tabular}
    \label{tbl:newt}
\end{table}

Computer memory~\cite{Roberts06b} currently limits us to the first few subgrid Fourier modes, $1,\sin\theta,\cos2\theta$, for this modelling of \bspde.
In modelling the microscale lattice dynamics~\eqref{eq:bssde} these three nonautonomous modes are complete, so in the application to coarse graining to~\eqref{eq:bssdm} the analysis is quite general in that it accounts for all possible microscale forcings.
Table~\ref{tbl:newt} indicates the level of complexity of the multi-parameter asymptotic series via a type of Newton diagram.
The table reports the number of terms in various parts of the discrete model \npde\ $\dot \uj=g_j(\vec U,t,\pars)$ (there are vastly more terms describing the subgrid microscale structure~$v_j(\vec U,x,t,\pars)$ of the \nsm). 
\emph{If we pursue either higher order truncations or more Fourier modes, then the complexity of the model increases combinatorially}.
Rational resolution of the subgrid scale fluctuation interactions, in order to determine their macroscale effects,  suffers from a combinatorial explosion in terms.
Thus, for the moment, truncate the model as in~\eqref{eq:strongquad}.

\paragraph{Abandon strong nonautonomous modelling}
An undesirable feature of the macroscale discrete model \npde{}s~\eqref{eq:strongquad} is the inescapable appearance in the quadratic forcing terms of fast time convolutions, such as $\Z1 \phi_{j,1} =\exp(-\beta_1t)\star \phi_{j,1}$ and $\Z{1,2} \phi_{j,2} = \exp(-\beta_1t)\star \exp(-\beta_2t)\star \phi_{j,2}$\,.
These require resolution of the subgrid fast time scales in order to maintain fidelity with the original \bspde{} and, depending on the nature of the forcing, may require incongruously small time steps for a supposedly slowly evolving model.
Such calculations become particularly laborious when the forcing involves extremely rapid fluctuations.
However, maintaining strong fidelity with the details of the full forcing~$\phi(x,t)$ is a pyrrhic victory when we are only interested in the relatively slow long term dynamics of the element amplitudes~$\uj(t)$. 
Instead, we primarily need only those parts of the quadratic forcing factors (such as $\phi_{j,0}\Z1 \phi_{j,1}$ and $\phi_{j,0}\Z{1,2} \phi_{j,2}$) that \emph{over the long macroscale time scales} emerge as a mean drift and as new forcing.
The next section develops such weak models in the context of a coarse grid discretisation.

\section{Nonautonomous resonance influences deterministic dynamics} 
\label{sec:sto}
Here we introduce a method for extracting the cumulative mean drift effects and avoiding costly resolutions of subgrid fast time scales. 
The strong model \npde~\eqref{eq:strongquad} faithfully tracks any given realisation of the original \bspde~\cite[Proposition~3.6]{Potzsche2006} whether the forcing is deterministic or stochastic; 
however,  efficient numerical simulations require some mean approximation of the fast time scales inherent in the irreducible quadratic interactions.  
%Although the arguments presented in this section rely upon the forcing being stochastic white noise, Just et al.~\cite{Just01} argued that fast time deterministic chaos appears as noise when viewed over long time scales and showed that approximate solutions obtained by replacing chaotic dynamics with stochastic dynamics are more accurate when there are larger separations of time scales. 
%Thus in this section we discuss the derivation of a weak macroscale model of a \npde\ with stochastic forcing, but with the knowledge that the same arguments apply to \npde\ with rapidly fluctuating chaotic forcing.

%\subsection{Transform quadratic stochastic forcing interactions}

Chao and Roberts~\cite{Chao95, Roberts03c, Roberts05c} discussed the case where the forcing is stochastic noise and argued that quadratic terms involving memory integrals of the noise were effectively new drift and new noise terms when viewed over long time scales  (as also noted by Drolet \& Vinal~\cite{Drolet97}).
In the strong model~\eqref{eq:strongquad} we need to understand and summarise the long term effects of the quadratic forcing terms that appear in the form $\phi_{\rho} \Z{\kappa} \phi_{\mu}$ and $\phi_{\rho} \Z{\lambda,\kappa} \phi_{\mu}$\,, returning now to the more general subscripts which define both the element and the eigenmode. 
The challenge for our macroscale discretisation of a \npde\ is to model the effect of the enormous number of interacting subgrid processes in spatially extended nonlinear \npde{}s.

We consider the long term dynamics of subgrid microscale processes $y_1$~and~$y_2$ defined via the \ode{}s
\begin{equation}
    \frac{dy_1}{dt}=\phi_{\rho}\Z{\kappa}\phi_{\mu}
    \qtq{and}
    \frac{dy_2}{dt}=\phi_{\rho}\Z{\lambda,\kappa}\phi_{\mu}\,.
    \label{eq:stoy}
\end{equation}
Summarising earlier research~\cite{Chao95, Roberts03c, Roberts05c}, first name the two convolutions that appear in the nonlinear terms~\eqref{eq:stoy} as $z_1=\Z{\kappa}\phi_{\mu}$ and $z_2=\Z{\lambda,\kappa}\phi_{\mu}$\,: then we must understand the long term properties of $y_1$~and~$y_2$ governed by the coupled system of \npde{}s
\begin{eqnarray}
    \dot y_1=z_1\phi_{\rho}\,,&&
    \dot z_1=-\beta_{\kappa} z_1 +\phi_{\mu}\,,\nonumber\\
    \dot y_2=z_2\phi_{\rho}\,,&&
    \dot z_2=-\beta_{\lambda} z_2 +z_1\,.
    \label{eq:bin}
\end{eqnarray}
One class of examples are when the autonomous forcing is simple harmonic fluctuations: say \(\phi_\mu=\cos(\omega_\mu t)\) and \(\phi_\rho=\cos(\omega_\rho t+\varphi)\).
If the frequencies are the same, \(\omega_\mu=\omega=\omega_\rho\)\,, then \(z_1=[\beta_\kappa\cos \omega t+\omega\sin \omega t-\beta_\kappa e^{-\beta_\kappa t}]/(\beta_\kappa^2+\omega^2)\).
Now \(y_1\) itself is rather complicated, but all we need is the derivative
\begin{eqnarray*}
\dot y_1&=&
\frac{\beta_\kappa\cos\varphi-\omega\sin\varphi}{2(\beta_\kappa^2+\omega^2)}
+\frac{\beta_\kappa\cos(2\omega t+\varphi)+\omega\sin(2\omega t+\varphi)}{2(\beta_\kappa^2+\omega^2)}
\\&&{}
-\frac{\beta_\kappa\cos(\omega t+\varphi)e^{-\beta_\kappa t}}{\beta_\kappa^2+\omega^2}\,:
\end{eqnarray*}
Among the enormous number of such influences in the model, the fluctuating components are generally less important than the mean drift effect, so we form a weak model by the replacement
\begin{equation}
\phi_{\rho}\Z{\kappa}\phi_{\mu}=\frac{dy_1}{dt} \mapsto 
\begin{cases}
\frac{\beta_\kappa\cos\varphi-\omega_\mu\sin\varphi}{2(\beta_\kappa^2+\omega_\mu^2)}&\omega_\mu=\omega_\rho\,,
\\0&\omega_\mu\neq\omega_\rho\,.
\end{cases}
\end{equation}
The replacement by zero when \(\omega_\mu\neq\omega_\rho\) follows because in this case all terms in~\(\dot y_1\) fluctuate and there is no mean effect.
Algebra for more convolutions is similar, just more detailed: for example, the weak modelling invokes the replacement
\begin{equation}
\phi_{\rho}\Z{\lambda,\kappa}\phi_{\mu} \mapsto
\begin{cases}
\frac{(\beta_\kappa\beta_\lambda-\omega_\mu^2)\cos\varphi
-\omega_\mu(\beta_\kappa+\beta_\lambda)\sin\varphi}
{2(\beta_\kappa^2+\omega_\mu^2)(\beta_\lambda^2+\omega_\mu^2)}
&\omega_\mu=\omega_\rho\,,
\\0&\omega_\mu\neq\omega_\rho\,.
\end{cases}
\end{equation}
Depending upon the nature of the nonautonomous effects, there will be other similar replacements to create an effective weak model that only resolves slow time scales by eliminating the fast convolutions.

The stochastic case gives another example. 
Deterministic centre manifold theory applied to the Fokker--Planck equation for the system~\eqref{eq:bin} proves~\cite[\S4]{Roberts05c} that as time $t\to\infty$ the probability density function for the \npde~\eqref{eq:bin} tends \emph{exponentially quickly} to a quasi-stationary distribution~\cite[e.g.]{Pollett90}.
The quasi-stationary distribution evolves according to a Kramers--Moyal equation which we interpret as approximating a Fokker--Planck equation for a system of \npde{}s (the neglected terms represent algebraically decaying non-Markovian effects among the $\vec y$~variables~\cite[eqn.~(11)]{Just01}).
This established analysis~\cite[\S4]{Roberts05c} of the Fokker--Planck equation for system~\eqref{eq:stoy} models the system's long-time dynamics by the stochastic \npde{}s
\begin{equation}
    \frac{dy_1}{dt}=\rat12 s+\frac{\psi_1(t)}{\sqrt{2\beta_{\kappa}}}
    \quad\text{and}\quad
    \frac{dy_2}{dt}=\frac{1}{\beta_{\kappa}+\beta_{\lambda} }\left(
    \frac{\psi_1(t)}{\sqrt{2\beta_{\kappa}}}
    +\frac{\psi_2(t)}{\sqrt{2\beta_{\lambda} }} \right),
    \label{eq:oosnn}
\end{equation}
depending upon whether the forcing terms $\phi_{\mu}$~and~$\phi_{\rho}$ are independent ($s=0$) or the same process ($s=1$).
As proved previously~\cite[Appendix~B]{Roberts05c}, and analogous to the argument of Just et al.~\cite[equation~(11)]{Just01}, the two~$\psi_i(t)$ are \emph{new forcing terms independent of $\phi_{\mu}$~and~$\phi_{\rho}$ over long time scales}.
For the case of identical $\phi_{\mu}$~and~$\phi_{\rho}$ ($s=1$) there is a mean drift~$\rat12$ in the stochastic process~$y_1$; there is no mean drift in the other case of independent $\phi_{\mu}$~and~$\phi_{\rho}$ ($s=0$).

%\subsection{Transform the complicated strong model to be usefully weak.} 
Via the \npde{}s~\eqref{eq:oosnn} and for the specific case of \bspde{}, to obtain a weak macroscale model for \emph{long time scales} we replace the quadratic noises in the derived strong macroscale model~\eqref{eq:strongquad} as follows:\footnote{
Note that $\delta_{ij}$~and~$\delta_{mn}$, with its pair of subscripts, do \emph{not} denote a centred difference but rather denote the Dirac delta to cater for the self interaction of a noise when there is a mean drift effect ($s=1$), or not ($s=0$), as appropriate.}
\begin{eqnarray}
    \phi_{i,p}\Z{k}\phi_{j,n} &\mapsto& 
    \frac12\delta_{ij}\delta_{pn} +
    \frac{H}{k\pi\sqrt2}\psi_{ijpnk}(t)
    \,,\nonumber\\
    \phi_{i,p}\Z{l,k}\phi_{j,n} &\mapsto&
    \frac{H^3}{\pi^3(k^2+l^2)} \left[
    \frac1{l\sqrt2}\psi_{ijpnl}(t) 
    +\frac1{k\sqrt2}\psi_{ijpnlk}(t)
    \right]
    \,,\label{eq:xform}
\end{eqnarray}
where we reintroduce the element-eigenmode pair in subscripts of~$\phi$ but only require the eigenmode subscripts for~$\Z{}$ since here the decay rates $\beta_{jk}=\pi^2k^2/H^2$ are independent of element~$j$. 
The noise terms $\psi$ are effectively \emph{new
independent} white noises; that is, they are derivatives of new independent Wiener processes.

\section{Conclusion}
The critical innovation here is that we have demonstrated, via the particular example of \bspde, how  to construct a macroscale discretisation that systematically models the net effect of many subgrid microscale nonautonomous effects, both within an element and between neighbouring elements.  
The modelling applies to very general nonautonomous spatially distributed systems.

Novel theoretical support for the modelling comes from dividing the spatial domain into finite sized elements with coupling conditions~\eqref{eq:ibc}, invoking \ncm\ theory by Aulbach and Wanner~\cite{Aulbach96, Aulbach99, Aulbach2000}, Potzsche and Rasmussen~\cite{Potzsche2006}, Haragus and Iooss~\cite{Haragus2011}, and Arnold~\cite{Arnold03}, and then systematically analysing the subgrid processes together with the appropriate physical coupling between the elements.
This approach builds on success in discretely modelling autonomous \pde{}s~\cite[e.g.]{Roberts98a, Roberts00a, MacKenzie03}.
The theoretical support applies to nonautonomous difference equations on a spatial lattice, such as~\eqref{eq:bssde}, as well as to nonautonomous differential equations, such as \bspde.

The virtue of the weak modelling discussed in Section~\ref{sec:sto}, also recognised by Just et al.~\cite{Just01}, is that we may accurately take large time steps as \emph{all} the fast dynamics are eliminated in the systematic closure.
Similar ideas are employed in the study of weak nonlinear effects in water waves where reasonable approximations are obtained by substituting averages over long time scales~\cite[Chap. 10]{Dean91}.
General formulae for modelling quadratic forcing interactions~\cite{Roberts05c}, together with the iterative construction of nonautonomous slow manifold models~\cite{Roberts96a, Roberts06g}, now empower us to model a wide range of \npde{}s.
Future research may find a useful simplification of the analysis used here if it can determine the mean drift terms, quadratic in~$\varepsilon^2$, without having to compute the other quadratic forcing terms.
%Techniques from stochastic averaging may help here~\cite[e.g.]{Pavliotis07}.

What about spatial domains with physical boundary conditions at their extremes? 
The artificial coupling parameter~$\gamma$ controls the information flow between adjacent elements; thus our truncation to a finite power in~$\gamma$ restricts the influence in the model of any physical boundaries to just those few elements near that physical boundary.
The approach proposed here is based upon the \emph{local} dynamics on small elements while maintaining fidelity, via \ncm\ theory, with the global dynamics of the original \npde.
In the interior of the bounded system, the methods described here remain unchanged and thus produce identical model \npde{}s.
The same methodology, but with different details, will account for physical boundaries to produce a discrete model valid across the whole domain.
Such modelling incorporating physical boundaries has already been shown for the autonomous Burgers' \pde~\cite{Roberts01b} and for shear dispersion in a channel~\cite{MacKenzie03}.

This approach to spatial discretisation of the \npdee\ may be extended to higher spatial dimensions as for autonomous \pde{}s~\cite{MacKenzie03, Roberts2011a}.
Because of the need to decompose the residuals into eigenmodes on each element, the application to higher spatial dimensions are likely to require tessellating space into simple regular elements for \npde{}s.

\paragraph{Acknowledgement} The Australian Research Council supported this research with grants DP0774311, DP0988738, DP120104260.

\bibliographystyle{plain}
\IfFileExists{ajr.sty}
{\bibliography{bibexport,ajr,bib}}
{\bibliography{bibexport}}

\end{document}